\newtheorem{thm}{Theorem}[section]
\newtheorem{lem}[thm]{Lemma}
\newtheorem{cor}[thm]{Corollary}
\newtheorem{Def}[thm]{Definition}
\newtheorem{claim}{Claim}
\newtheorem{rem}[thm]{Remark}
\def\N{\mathcal{N}}
\def\B{\mathcal{B}}
\newcommand{\si}{\sigma}
\newcommand{\sm}{\setminus}
\newcommand{\rar}{\Rightarrow}
\newcommand{\nin}{\notin}
\numberwithin{equation}{section}
\begin{document}
\title{Homotopy Type of Neighborhood Complexes of Kneser Graphs, $KG_{2,k}$}
\author{ Nandini Nilakantan\footnote{Department of Mathematics and Statistics, IIT Kanpur, Kanpur-208016, India. nandini@iitk.ac.in.},  Anurag Singh\footnote{{Department of Mathematics and Statistics, IIT Kanpur, Kanpur-208016, India. anurags@iitk.ac.in.}}}
\date{}
\maketitle

\begin{abstract}
Schrijver identified a family of vertex critical subgraphs  of the  Kneser graphs called  the stable Kneser graphs $SG_{n,k}$. Bj\"{o}rner and de Longueville proved that the neighborhood complex of the stable Kneser graph $SG_{n,k}$ is homotopy equivalent to a $k-$sphere. 

In this article, we prove that the homotopy type of  the neighborhood complex of the Kneser graph $KG_{2,k}$ is a wedge of $(k+4)(k+1)+1$ spheres of dimension $k$. We construct a  maximal subgraph $S_{2,k}$ of $KG_{2,k}$, whose neighborhood complex is homotopy equivalent to the neighborhood complex of $SG_{2,k}$. Further, we prove that the neighborhood complex of $S_{2,k}$ deformation retracts onto the neighborhood complex of $SG_{2,k}$.
\end{abstract}

\noindent {\bf Keywords} : Hom complexes, Kneser Graphs, Discrete Morse theory.

\noindent 2000 {\it Mathematics Subject Classification} primary 05C15, secondary 57M15

\vspace{.1in}

\hrule


\section{{\bf Introduction}}
Lov\'asz in 1978, proved the Kneser Conjecture  (\cite{Lov78}), which states that the chromatic number of the Kneser graph $KG_{n,k}$ is $k+2$. In this proof, he introduced a prodsimplicial complex corresponding to a graph $G$, called the neighborhood complex of $G$, denoted by $\N(G)$. Lov\'asz showed that $\N(KG_{n,k})$ is $(k-1)$-connected. In  \cite{Sch78}, Schrijver identified a vertex critical family of subgraphs of the  Kneser graphs called the stable Kneser graphs $SG_{n,k}$ and showed that the chromatic numbers of  both $SG_{n,k}$ and $KG_{n,k}$ are the same.
In 2003, Bj\"orner and de Longueville  (\cite{BL03}) proved that the neighborhood complex of $SG_{n,k}$ is homotopy equivalent to a $k$-sphere. 

Kozlov (\cite{Koz07}, Proposition 17.28) showed that $\N(KG_{n,k})$ is homotopy equivalent to a wedge of spheres of dimension $k$.  

In this article, we identify a subgraph $S_{2,k}$ of $KG_{2,k}$, where the set of vertices of both the graphs is the same. We first show that :

 \begin{thm}\label{thm2} The neighborhood complex, $\N(S_{2,k})$, of $S_{2,k}$ collapses onto the neighborhood complex, $\N(SG_{2,k}),$ of the stable Kneser graph $SG_{2,k}$, for all $k \geq 0$.
  \end{thm}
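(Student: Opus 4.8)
The plan is to realize the collapse concretely and organize it with discrete Morse theory: I would build an acyclic matching on the face poset of $\N(S_{2,k})$ whose critical cells are precisely the faces of $\N(SG_{2,k})$, and then invoke the collapse theorem of discrete Morse theory (a simplicial complex carrying an acyclic matching whose critical cells form a subcomplex $\mathcal A$ collapses onto $\mathcal A$).

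The first step is to check that $\N(SG_{2,k})$ really is a subcomplex of $\N(S_{2,k})$. By construction $S_{2,k}$ retains every edge of $KG_{2,k}$ joining two disjoint stable $2$-subsets, so $N_{SG_{2,k}}(v)\subseteq N_{S_{2,k}}(v)$ for every stable vertex $v$; hence any simplex of $\N(SG_{2,k})$, being contained in some $N_{SG_{2,k}}(v)$, is a simplex of $\N(S_{2,k})$. As $\N(SG_{2,k})$ is a subcomplex, its complement $\mathcal D:=\N(S_{2,k})\sm\N(SG_{2,k})$ is a filter in the face poset; a face lies in $\mathcal D$ exactly when it either contains a non-stable $2$-subset or consists of stable $2$-subsets having a common neighbour in $S_{2,k}$ but not among the stable $2$-subsets. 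The graph induced by $S_{2,k}$ on the stable vertices is exactly $SG_{2,k}$ (its edges among stable $2$-subsets are disjoint stable pairs, and all such are kept), which is what fixes $\mathcal D$.

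For the matching I would peel off the non-stable vertices one at a time, using the elementary fact that if $v$ is a vertex of a graph $G$ with $N_G(v)\subseteq N_G(u)$ for some $u\neq v$, then $\operatorname{lk}_{\N(G)}(v)$ is a cone with apex $u$ --- each of its facets has the form $N_G(x)\sm\{v\}$ with $x\in N_G(v)\subseteq N_G(u)$, hence contains $u$ --- so $\N(G)$ collapses onto the full subcomplex on $V(G)\sm\{v\}$, namely $\N(G\sm v)$. It therefore suffices to order the non-stable $2$-subsets $v_1,\dots,v_m$ so that in each $G_i:=S_{2,k}\sm\{v_1,\dots,v_{i-1}\}$ the vertex $v_i$ is dominated by some $u_i\notin\{v_1,\dots,v_{i-1}\}$; then $\N(S_{2,k})=\N(G_1)\searrow\cdots\searrow\N(G_{m+1})=\N(SG_{2,k})$. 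For $k=0$ the non-stable vertices are isolated in $S_{2,0}$, so there is nothing to do; for $k\ge 1$ one uses the explicit description of $S_{2,k}$ and the cyclic symmetry of $[k+4]$ to locate, at each stage, a stable $2$-subset disjoint from every remaining $S_{2,k}$-neighbour of the non-stable vertex to be deleted.

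The main obstacle I anticipate is exactly this domination chain: one must show that the $S_{2,k}$-neighbourhood of each non-stable $2$-subset, after the earlier deletions, is small enough and positioned so as to be contained in another vertex's neighbourhood, and the deletion order has to be chosen with care to break the potential circular dependencies among non-stable vertices. This amounts to a finite case analysis in the cyclic combinatorics of $2$-subsets of $[k+4]$ (which pairs are stable, which are disjoint, how deleting a non-stable vertex shrinks the others' neighbourhoods). Should a global deletion order prove awkward, the fallback is to construct a single acyclic matching on $\N(S_{2,k})$ directly --- matching each face of $\mathcal D$ by toggling the least vertex, in a fixed order, whose toggle keeps the face inside $\mathcal D$, then checking that every face of $\mathcal D$ admits such a toggle and that this ``first-vertex'' matching is acyclic --- and again apply the collapse theorem of discrete Morse theory.
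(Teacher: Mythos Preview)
Your primary strategy---removing the non-stable vertices by a sequence of folds---breaks down already at $k=2$. In $S_{2,2}$ (on $[6]$) the non-stable vertex $12$ has $N_{S_{2,2}}(12)=\{35,36,46\}$, since its only neighbours are the stable $2$-subsets of $\{3,4,5,6\}$. A dominating vertex $u$ would have to be disjoint from each of $35,36,46$, hence $u\subseteq\{1,2\}$, forcing $u=12$. So $12$ is not dominated in $S_{2,2}$, and by the cyclic symmetry no non-stable vertex is. Moreover, deleting non-stable vertices never helps: in $S_{2,k}$ every neighbour of a non-stable vertex is stable, so the neighbourhood of a remaining non-stable vertex is unchanged by such deletions, while the neighbourhoods of potential dominators can only shrink. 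Thus no deletion order produces the required domination chain for $k\ge 2$.

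What this reflects is that the passage from $\N(S_{2,k}\sm\{12,\dots,(l-1)l\})$ to $\N(S_{2,k}\sm\{12,\dots,l(l+1)\})$ is not a single ``link is a cone'' collapse. The paper carries it out in two genuinely different stages: first (Lemma~\ref{lem1}) it collapses away all faces $\sigma$ with $C_\sigma=\{l,l+1\}$---these are the faces whose \emph{only} common neighbour is $l(l+1)$, and they are matched by an element matching on $(l+2)(l+r+3)$ together with an induction on $k$ to handle the residual pieces; then (Lemma~\ref{lem2}) it removes the remaining faces containing $l(l+1)$ via a filtration $\B_1^l\supset\cdots\supset\B_{k+1}^l$ and, on each layer, a carefully chosen element matching on one of the vertices $(l+1)(l+3),\dots,(l+1)(l+k+3-i)$ determined by the position of $C_\sigma$. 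Your fallback (``toggle the least vertex whose toggle stays in $\mathcal D$'') points in this direction, but the real work is precisely in showing that such a toggle always exists and that the resulting matching is acyclic; neither is automatic here, and the paper's two-lemma structure with its inductive and layered matchings is what supplies the missing argument.
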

  
  Since, $\N(SG_{2,k})$ is homotopic to $S^{k}$ (\cite{BL03}), the following is a consequence of Theorem \ref{thm2}.
  
  \begin{cor}
The neighborhood complex, $\N(S_{2,k}),$ of $S_{2,k}$ is homotopic to the $k$-sphere $S^{k},$ for all $k \geq 0$.
  \end{cor}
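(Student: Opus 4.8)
The plan is to deduce the Corollary directly from Theorem \ref{thm2} together with the known homotopy type of $\N(SG_{2,k})$. The only substantive input is the standard fact that a collapse — a finite sequence of elementary collapses, each removing a free face together with its unique maximal coface — is not merely a homotopy equivalence but induces a strong deformation retraction of the underlying polyhedron onto the subcomplex, and in fact a simple-homotopy equivalence. This holds verbatim in the prodsimplicial setting in which neighborhood complexes naturally live, since each elementary collapse is realized geometrically by pushing in along the free cell, exactly as in the simplicial case.

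Concretely, I would proceed as follows. First, invoke Theorem \ref{thm2}: $\N(S_{2,k})$ collapses onto $\N(SG_{2,k})$, and hence, by the remark above, there is a homotopy equivalence $\N(S_{2,k}) \simeq \N(SG_{2,k})$. Second, recall the theorem of Bj\"orner and de Longueville (\cite{BL03}) that $\N(SG_{n,k}) \simeq S^{k}$; specializing to $n = 2$ gives $\N(SG_{2,k}) \simeq S^{k}$ for all $k \geq 0$. Composing the two equivalences yields $\N(S_{2,k}) \simeq S^{k}$, which is the assertion. I would also record the small base cases $k = 0$ and $k = 1$ separately, just to confirm that the statement of \cite{BL03} and of Theorem \ref{thm2} is being quoted in the correct parameter range and that the complexes involved are nonempty there.

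I do not anticipate a genuine obstacle: essentially all of the mathematical content is carried by Theorem \ref{thm2}, and the proof of the Corollary is a one-line composition of homotopy equivalences. The only points requiring a modicum of care are the (routine) observation that collapsibility onto a subcomplex implies homotopy equivalence, and the correct citation of the range of $n$ and $k$ in \cite{BL03}.
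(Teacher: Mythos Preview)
Your proposal is correct and follows essentially the same approach as the paper: the Corollary is stated there as an immediate consequence of Theorem~\ref{thm2} together with the Bj\"orner--de Longueville result $\N(SG_{2,k})\simeq S^{k}$ from \cite{BL03}. The paper does not even supply a separate proof beyond that remark, so your one-line composition of homotopy equivalences (collapse $\Rightarrow$ homotopy equivalence, then cite \cite{BL03}) is exactly what is intended.
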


 The graph $S_{2,k}$ is a maximal subgraph of $KG_{2,k},$ in the sense that the addition of an extra edge to the graph $S_{2,k}$  changes the homotopy type of the corresponding neighborhood complex. Using this subgraph, we prove the main result of this article.

 \begin{thm}\label{thm3}
 The neighborhood complex, $\N(KG_{2,k}),$ of the Kneser graph $KG_{2,k}$ is  homotopic to the wedge of $k^2+5k+5$ copies of the $k$-sphere $S^{k},$ for all $k \geq 0$.
  \end{thm}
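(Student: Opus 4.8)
The plan is to recover $\N(KG_{2,k})$ from $\N(S_{2,k})$ by re-inserting, one at a time, the edges of $KG_{2,k}$ that are missing from the subgraph $S_{2,k}$, and to track the change in homotopy type at each step. Since $S_{2,k}$ and $KG_{2,k}$ have the same vertex set and $S_{2,k}\subseteq KG_{2,k}$, neighborhoods only shrink on passing to $S_{2,k}$, so $\N(S_{2,k})$ is a subcomplex of $\N(KG_{2,k})$; by Theorem~\ref{thm2} together with \cite{BL03} it is homotopy equivalent to $S^{k}$. Fix an ordering $e_{1},\dots,e_{m}$ of $E(KG_{2,k})\sm E(S_{2,k})$, put $G_{0}=S_{2,k}$ and $G_{i}=G_{i-1}\cup\{e_{i}\}$, and consider the filtration $\N(S_{2,k})=\N(G_{0})\subseteq\N(G_{1})\subseteq\cdots\subseteq\N(G_{m})=\N(KG_{2,k})$. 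It then suffices to show that the successive inclusions wedge on a controlled number of $k$-spheres totalling $(k+4)(k+1)$, so that the final complex is a wedge of $(k+4)(k+1)+1=k^{2}+5k+5$ copies of $S^{k}$.

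The basic local fact is that if $e_{i}=\{u,w\}$ then $N_{G_{i}}(x)=N_{G_{i-1}}(x)$ for every vertex $x\neq u,w$, while $N_{G_{i}}(u)=N_{G_{i-1}}(u)\cup\{w\}$ and $N_{G_{i}}(w)=N_{G_{i-1}}(w)\cup\{u\}$. Since $\N(G)=\bigcup_{v}\Delta(N_{G}(v))$ for any graph $G$ (where $\Delta(A)$ is the full simplex on $A$), this yields $\N(G_{i})=\N(G_{i-1})\cup\bigl(w*\Delta(N_{G_{i-1}}(u))\bigr)\cup\bigl(u*\Delta(N_{G_{i-1}}(w))\bigr)$, where $*$ denotes join. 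Each piece being glued on, successively, is a cone, so its homotopy contribution is governed by the homotopy type of its intersection with the complex built so far: a subcomplex of $\Delta(N_{G_{i-1}}(u))$ enlarged by certain cones over its faces with apex $w$ (and symmetrically for $w$). I would choose the edge ordering so that at every step this intersection is either contractible (no change in homotopy type) or homotopy equivalent to a wedge of $(k-1)$-spheres, in which case coning it off contributes exactly that many $k$-spheres to the wedge; the bookkeeping of these intersections would be done with explicit acyclic matchings, extending those used for Theorem~\ref{thm2}.

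I would organize the count around the $k+4$ ``unstable'' vertices of $KG_{2,k}$, namely the cyclically consecutive pairs $\{i,i+1\}$ of $[k+4]$, which are precisely the vertices absent from $SG_{2,k}$; adding the edges incident to these vertices in suitable blocks, I expect each block to contribute $k+1$ new $k$-spheres, for a total of $(k+4)(k+1)$ spheres added to the single sphere of $\N(S_{2,k})$. Throughout, I would invoke that $\N(KG_{2,k})$ is $(k-1)$-connected \cite{Lov78}: hence for $k\ge 2$ the CW complex produced, having cells only in dimensions $0$ and $k$, is simply connected and therefore homotopy equivalent to the asserted wedge of $k$-spheres, while the maximality of $S_{2,k}$ recorded above confirms that no edge-insertion step can be skipped. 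The cases $k=0$ (where $\N(KG_{2,0})$ is $6$ isolated points, i.e.\ $\bigvee_{5}S^{0}$) and $k=1$ would be handled by direct inspection.

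The hard part is the step-by-step homotopy analysis: exhibiting an ordering of the missing edges for which every intermediate intersection of a newly glued cone with the complex built so far is a wedge of spheres of exactly the right dimension, and verifying that the resulting attaching maps are null-homotopic so that honest wedges — not twisted attachments — arise. This requires a precise combinatorial description of how the neighborhoods $N_{G_{i}}(x)$ evolve and of how the already-built subcomplex sits inside each newly glued cone, essentially a discrete Morse computation carried out along the edge-addition filtration; checking that the individual contributions telescope to exactly $(k+4)(k+1)$ is the crux of the argument.
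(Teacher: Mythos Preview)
Your edge-by-edge filtration is a genuinely different strategy from the paper's, which works directly on the face poset of $\N(KG_{2,k})$ via discrete Morse theory: the paper writes $\N(KG_{2,k})\sm\N(S_{2,k})$ as a disjoint union $\bigsqcup_{i=1}^{k+4} X_{i,i+1}$ indexed by the unstable vertices (where $X_{i,i+1}$ is the set of faces of $N(i(i+1))$ not already in $\N(S_{2,k})$), refines each $X_{i,i+1}$ into pieces $E_j^i$ and $F_{j,t}^i$ via explicit poset maps, and uses the Cluster Lemma (Lemma~\ref{lem34}) to assemble element matchings into a single acyclic matching with exactly $k+1$ critical $k$-cells on each $X_{i,i+1}$ and one critical $k$-cell on $\N(S_{2,k})$. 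Since all critical cells then lie in a single dimension, Corollary~\ref{rem36} yields the wedge of spheres immediately, with no attaching maps to analyze.

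Your proposal, by contrast, has a gap at precisely the step you yourself flag as ``the hard part.'' You never verify, for any choice of edge ordering, that the intersection of each newly glued cone $w*\Delta(N_{G_{i-1}}(u))$ with the complex built so far has the homotopy type of $S^{k-1}$. That intersection is $\Delta(N_{G_{i-1}}(u))\cup w*L$ with $L=\bigcup_{z\in N_{G_{i-1}}(w)}\Delta\bigl(N_{G_{i-1}}(u)\cap N_{G_{i-1}}(z)\bigr)$, and its homotopy type depends on which missing edges have already been restored; you provide no ordering and no argument that every such intersection is a $(k-1)$-sphere. Invoking the $(k-1)$-connectivity of the \emph{final} complex $\N(KG_{2,k})$ does not control the intermediate complexes $\N(G_i)$, which is where the attaching maps live and where null-homotopy must be checked. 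Without this step, the pleasant arithmetic (\,$(k+4)(k+1)/2$ missing edges, two cones apiece, hence $(k+4)(k+1)$ new spheres\,) is only a heuristic. The paper's acyclic-matching argument sidesteps all of this by never leaving the combinatorics of the face poset.
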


 The layout of this article is as follows: In Section 2, we present  definitions which are crucial to this article. In Section 3, we highlight some of the main results that we will be using from Discrete Morse Theory. In Section 4, we show that the neighborhood complex of $S_{2,k}$ is homotopic to the $k$-sphere {\it i.e.}, we prove Theorem \ref{thm2} and in Section 5, we prove the main result of this article, Theorem \ref{thm3}.

\section{{\bf Preliminaries}}

A {\it simple graph} is an ordered pair $G=(V,E)$, where $V$ is the set of vertices and $E  \subseteq \{ e \subseteq V  \ | \ \# (e) =2 \}$ is the set of edges of $G$.  An edge $e$ is denoted by $(v,w)$. In this article, all the graphs are simple and undirected {\it i.e.}, $(v,w)=(w,v)$. The vertices $v, w \in V$ are said to be adjacent, if $(v, w)\in E$. This is also denoted by $v \sim w$.

 A {\it graph homomorphism} from  $G$ to $H$ is a function $f : V(G) \to V(H)$ where $(v,w) \in E(G)$ implies that $(f(v),f(w)) \in E(H).$

A {\it finite abstract simplicial complex X} is a collection of finite sets where $\tau \in X$ and $\sigma \subset \tau$ implies $\sigma \in X$. The elements  of $X$ are called the  {\it simplices} of $X$. If $\sigma \in X$ and $\#(\sigma) =k+1$, then $\sigma$ is said to be $k\,dimensional$.

A {\it prodsimplicial complex} is a polyhedral complex, each of whose cells is a direct product of simplices (\cite{Koz07}).

For $v \in V(G)$, the {\it neighborhood  of $v$,} $N(v)=\{ w \in V(G) \ |  \
(v,w) \in E(G)\}$.  If $A\subset V(G)$, the neighborhood of $A$, $N(A)= \{x \in  V(G) \ | \ (x,a) \in E(G),
\,\,\forall\,\, a \in A \}$.
The {\it neighborhood complex}, $\N(G)$ of a graph $G$ is the abstract simplicial complex with vertices being the non isolated vertices of $G$ and simplices being all those subsets of $V(G)$ with a common neighbor. 

 Let  $X$ be a simplicial complex and $\tau, \sigma \in X$ such that
$\sigma \subsetneq \tau$ and  $\tau$ is the only maximal simplex in $X$ that contains $\sigma$.
  A  {\it simplicial collapse} of $X$ is the simplicial complex $Y$ obtained from $X$ by
  removing all those simplices $\gamma$  of $X$ such that
  $\sigma \subseteq \gamma \subseteq \tau$. $\sigma$ is called a {\it free face} of $\tau$ and $(\sigma, \tau)$ is called a {\it collapsible pair}. When we have a sequence of simplicial collapses from $X$ to $Z$, we say that $X$ simplicially collapses onto $Z$ and denote it by $X \searrow Z$.

Let $n \geq 1$, $k \geq 0$ and $[n]$ be the set   $ \{1,2, \ldots ,n \}$.

\begin{Def}
The Kneser graph $KG_{2,k}$ is the graph where $V(KG_{2,k}) =\{ \{i,j\} \ | \ i, j \in [k+4]  \ \text{and} \ i \neq j\}$
and  $v_1 \sim v_2$, if and only if $v_1 \cap v_2 = \emptyset$, {\it i.e.}, $(v_1,v_2) \in E(KG_{2,k}) \iff v_1 \cap v_2 =\emptyset$.

A vertex $\{i,j\}$ of $KG_{2,k}$ is said to be {\it stable} if $ j \neq i \pm 1$, with addition modulo $k+4.$ If $\{i,j\}$ is not stable, then it is said to be {\it unstable}.

The {\it stable Kneser graph} $SG_{2,k}$ is the induced subgraph of $KG_{2,k}$ each of whose vertices is stable.
\end{Def}

\begin{Def} The subgraph
$S_{2,k}$ of $KG_{2,k}$ is defined to be the graph with $V(S_{2,k})= V(KG_{2,k})$ and
$E(S_{2,k})= \{(u,v) \in E(KG_{2,k})\ | \ \text{at least one of}\ u \ \text{or} \ v\ \text{is stable} \}$.
\end{Def}

Schrijver proved that the graphs $SG_{n,k}$ are vertex critical, i.e. the chromatic number of any subgraph of $SG_{n,k}$ obtained by removing a vertex is strictly less than the chromatic number of $SG_{n,k}.$

In this article, the vertex $\{i,j\}$ of $KG_{2,k}$ will be denoted by $ij$.

\begin{Def}
Let $\alpha \in V(KG_{2,k}) $ i.e. $ \alpha =  ij$ for some $ i, j  \in [k+4]$.
The length of $\alpha$, $ l(\alpha) = |j-i| $. Clearly, if $\alpha \in V(SG_{2,k})$, then $2 \leq l(\alpha) \leq k+2$.
\end{Def}
For $\sigma =\{\alpha_1, \alpha_2, \ldots \alpha_{t}\} \in \mathcal{N}(KG_{2,k})$, let 
\begin{enumerate}
\item[$(a)$] $\sigma \oplus j = \{ \alpha_{1}  \oplus  j, \  \alpha_{2}  \oplus  j, \ \ldots , \ \alpha_{t}\oplus j\}$ and
\item[$(b)$] $\si \ominus j = \{ \alpha_{1}  \ominus  j, \ \alpha_{2}  \ominus  j,\  \ldots ,\  \alpha_{t} \ominus j \}$. 
\end{enumerate}
If $\alpha_{i} =i_1i_{2}$, then $\alpha_{i} \oplus j =(i_1 +j)(i_{2}+j)$ and $\alpha_{i} \ominus j = (i_1 -j) (i_{2}-j)$
with addition  and subtraction being modulo $k+4$.
  For example, in $\mathcal{N}(KG_{2,3})$, $67 \oplus 1 = 71 $, $23 \oplus 1 = 34$, $67 \ominus 1 = 56 $ and $12 \ominus 1 = 71=17$ (see \cite{BZ14}).

If $\sigma = \{\alpha_1, \alpha_2,\ldots,\alpha_t\} \in \mathcal{N}(KG_{2,k})$, then define 
\begin{equation}\label{eq2.1}
S_{\sigma} = \overset{t}{\underset{i=1}{\bigcup}} \ \alpha_i \ \text{and} \ \ C_\sigma = [k+4] \setminus S_{\sigma}.
\end{equation}
It is a simple observation that $C_{\sigma \setminus \{ab\}} \subseteq C_\sigma \cup \ \{a,b\}$ and $C_{\sigma \ \cup \ \{ab\}} = C_\sigma \setminus \{a,b\} .$

The following remark provides some insight into the structure of $\mathcal{N}(S_{2,k})$.

\begin{rem}\label{rem2}
\noindent
\begin{enumerate}
 \item[(i)] If $\alpha \in V(S_{2,k}) \setminus V(SG_{2,k})$, then there exists $ j \in \{0,1,2, \ldots,k+2,k+3\}$ such that $\alpha = 12 \oplus j$. Thus, $ N(\alpha)= \{u \oplus j \ | \ u \in N( 12)\} $.
\item[(ii)] If $\alpha$, $ \beta \in V(S_{2,k})$ such that $\alpha = \beta \oplus j$ for some $  j \in \{0,1,2,\ldots,k+2,k+3\}$, then $ N(\alpha)= \{u \oplus j \ | \ u \in N(\beta)\} $.
\end{enumerate}
\end{rem}

In this article, $\mathcal{N}(SG_{2,k})$, $\mathcal{N}(S_{2,k})$ and $\mathcal{N}(KG_{2,k})$ will denote the neighborhood complexes in $SG_{2,k}$, $S_{2,k}$ and $KG_{2,k}$, respectively. Further, addition and subtraction on vertices are $(\text{mod}\ k+4),$ {\it i.e.}, $30 \equiv 3(k+4) (\text{ mod } \ k+4)$ and $3(k+5) \equiv 31 (\text{ mod } \ k+4)$.

\section{{\bf Tools From Discrete Morse Theory}}

In this section, we present some of the tools that we require from Discrete Morse theory.

\begin{Def}[\cite{BZ14}]
A partial matching in a poset $(P,\text{ \textless})$ is a subset $M \subseteq P \times P$, where
\begin{enumerate}
\item[(i)] $(a,b) \in M$ implies $ b \succ a;$ {\it i.e.}, $a \ \text{\textless} \ b$ and $ \nexists $ $c \in P$ such that $ a \ \text{\textless}  \ c \ \text{\textless} \ b $ and

\item[(ii)] each $ a \in P $ belongs to at most one element in $M$.
\end{enumerate}

If $(a,b) \in M $, we denote $ a $ by $ d(b) $ and $ b$ by $ u(a) $. A partial matching on $P$ is called acyclic if there do not exist distinct $a_{i}\in P$, $1\leq i \leq  m$, $ m \geq 2 $ such that 
\begin{equation} \label{eq3.1}
 a_1 \ \prec \ u(a_1) \ \succ \ a_2 \ \prec \ u(a_2) \ \succ \ \ldots \ \prec \ u(a_m) \ \succ \ a_1 \  \ {\text is\ \ a\ \ cycle}.
\end{equation}
\end{Def}

For an acyclic matching $M$ on $P$, the unmatched elements of $P$ are called {\it critical}. If each element of $P$ belongs to a member of $M$, then $M$ is called a {\it perfect} matching on $P$.

Let $\Delta \subseteq 2^X$ and $x \in X$. Define 
\begin{equation*}
\begin{aligned}
& M_x =\{(\sigma \setminus \{x\} , \ \sigma \cup \ \{x\}) \ | \  \sigma \setminus\{ x\}, \ \sigma \cup \ \{x\} \in \Delta\} ~~and
\\
& \Delta_x= \{\sigma \in \Delta \ | \ \sigma \setminus \{x\},\ \sigma \cup  \ \{x\} \in \Delta\}.
\end{aligned}
\end{equation*}

\begin{lem}\label{pac}
The set $M_x$ is a perfect acyclic matching on $\Delta_x$.
\end{lem}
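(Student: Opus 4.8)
The plan is to verify directly the three requirements in the definition of a perfect acyclic matching: that $M_x$ is a partial matching on the poset $\Delta_x$ (ordered by inclusion), that it is perfect, and that it is acyclic. The mechanism behind all three is a simple dichotomy: for $\sigma\in\Delta_x$ exactly one of $x\in\sigma$, $x\notin\sigma$ holds, and in either case both $\sigma\setminus\{x\}$ and $\sigma\cup\{x\}$ again lie in $\Delta_x$. Indeed $(\sigma\setminus\{x\})\setminus\{x\}=\sigma\setminus\{x\}$ and $(\sigma\setminus\{x\})\cup\{x\}=\sigma\cup\{x\}$ are both in $\Delta$ whenever $\sigma\in\Delta_x$, and symmetrically with $\sigma\cup\{x\}$ in place of $\sigma\setminus\{x\}$; in particular $M_x\subseteq\Delta_x\times\Delta_x$, so it is meaningful to regard $M_x$ as a matching on $\Delta_x$.

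For the matching properties, take $(a,b)\in M_x$ and write $a=\sigma\setminus\{x\}$, $b=\sigma\cup\{x\}$. Then $x\notin a$, $x\in b$, $a\subsetneq b$ and $\#(b)=\#(a)+1$, so $b$ covers $a$; this is condition (i). For condition (ii), if an element $a$ appears in some pair of $M_x$, then either $x\notin a$, in which case the pair must be $(a,\,a\cup\{x\})$, or $x\in a$, in which case the pair must be $(a\setminus\{x\},\,a)$; since these two situations are mutually exclusive, $a$ lies in at most one pair. Perfectness is then immediate from the dichotomy: if $x\notin\sigma$ then $(\sigma,\,\sigma\cup\{x\})\in M_x$, and if $x\in\sigma$ then $(\sigma\setminus\{x\},\,\sigma)\in M_x$, so every $\sigma\in\Delta_x$ is matched. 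Hence $M_x$ is a perfect matching on $\Delta_x$.

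The step deserving the most care — though it too is short — is acyclicity. Suppose for contradiction that there is a cycle $a_1\prec u(a_1)\succ a_2\prec u(a_2)\succ\cdots\prec u(a_m)\succ a_1$ with $m\geq 2$ and the $a_i$ pairwise distinct, where $\prec,\succ$ are covering relations in the Hasse diagram of $\Delta_x$. Each $a_i$ is the lower member of its pair, so $x\notin a_i$ and $u(a_i)=a_i\cup\{x\}$. Since $a_{i+1}$ is covered by $u(a_i)=a_i\cup\{x\}$, it is obtained from $a_i\cup\{x\}$ by deleting a single element; but $x\notin a_{i+1}$, so the deleted element must be $x$, which forces $a_{i+1}=a_i$. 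This contradicts the distinctness of $a_1$ and $a_2$ (here $m\geq 2$ guarantees $a_2$ is present). Therefore no such cycle exists and $M_x$ is acyclic. I do not expect a genuine obstacle in any of this; the one point to keep straight is that the relations in \eqref{eq3.1} are covering relations, so that the cardinality bookkeeping in the acyclicity argument is valid.
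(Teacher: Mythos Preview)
Your proof is correct and follows essentially the same approach as the paper's: both argue acyclicity by contradiction, using that each $u(a_i)=a_i\cup\{x\}$ together with $x\notin a_{i+1}$. Your acyclicity step is slightly more direct (you deduce $a_{i+1}=a_i$ immediately from the cover relation, whereas the paper builds a chain $\tau_n\subseteq\cdots\subseteq\tau_1$ and then closes it to get $\tau_1=\tau_n$), and you also spell out the matching and perfectness conditions that the paper leaves implicit.
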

\begin{proof}
Assume that $M_x$ is not acyclic, i.e there exist pairs $(\sigma_1, \tau_1), (\sigma_2, \tau_2),$ $ \ldots , (\sigma_n, \tau_n)$ such that
$ \sigma_1 \prec \tau_1 \succ \sigma_2 \prec \tau_2 \succ \ldots \prec \tau_n \succ \sigma_1  $.
Since, $\tau_i = \sigma_i \cup \ \{x\}$ and $\tau_i \succ \sigma_{i+1}$, we have $\tau_{i+1} \subseteq \tau_i$. Therefore, $\tau_n \subseteq \tau_1 = \sigma_1 \cup \ \{x\}$. On the other hand, $\tau_n \succ \sigma_1$ implies that $\sigma_1 \subseteq \tau_n$ {\it i.e.} $\tau_1=\tau_n$, a contradiction. Hence, $M_x$ is a perfect acyclic matching on $\Delta_x$.
\end{proof}

\begin{lem}[\cite{Jon08}]\label{lem33}
Let $M'$ be an acyclic matching on $\Delta'  = \Delta \setminus \Delta_x$. Then, $M = M' \cup \ M_x$ is an acyclic matching on $\Delta$.
\end{lem}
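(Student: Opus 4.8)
The plan is to prove the lemma in two stages. First I would check that $M = M' \cup M_x$ is a genuine partial matching on $\Delta$; then I would rule out alternating cycles by following the vertex $x$ around a hypothetical cycle. Throughout I use that $\Delta$, being a simplicial complex, is closed under passing to subsets.

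For the first stage, the point to record is that any $\sigma \in \Delta$ containing $x$ lies in $\Delta_x$, since then $\sigma \setminus \{x\} \in \Delta$ and $\sigma \cup \{x\} = \sigma \in \Delta$. Hence
\[
\Delta' = \Delta \setminus \Delta_x = \{\sigma \in \Delta \ : \ x \notin \sigma \ \text{ and } \ \sigma \cup \{x\} \notin \Delta\},
\]
so every simplex of $\Delta'$ avoids $x$, the supports $\Delta'$ and $\Delta_x$ of the two matchings are disjoint, and therefore each simplex of $\Delta$ belongs to at most one pair of $M$. It remains to see that each pair of $M$ is a covering pair of $\Delta$ (the requirement $b \succ a$). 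For a pair of $M_x$ this is immediate, as its two simplices differ only by $x$. For a pair $(a, u(a)) \in M'$, which is a covering pair of the induced poset $\Delta'$, I would argue that it is already one in $\Delta$: if there were $c \in \Delta$ with $a \subsetneq c \subsetneq u(a)$, then for $v \in c \setminus a$ the simplex $a \cup \{v\}$ lies in $\Delta$, avoids $x$ (as $x \notin u(a)$), and satisfies $a \cup \{v, x\} \notin \Delta$ (else $a \cup \{x\} \in \Delta$, contradicting $a \in \Delta'$), so $a \cup \{v\} \in \Delta'$ would sit strictly between $a$ and $u(a)$ in $\Delta'$ — impossible.

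For the second stage, suppose for contradiction that $M$ admits a cycle
\[
a_1 \ \prec \ u(a_1) \ \succ \ a_2 \ \prec \ u(a_2) \ \succ \ \cdots \ \prec \ u(a_m) \ \succ \ a_1
\]
with the $a_i$ pairwise distinct, $m \geq 2$, and indices read modulo $m$. The heart of the argument is to show that no pair of this cycle lies in $M_x$. Suppose $(a_i, u(a_i)) \in M_x$; then $u(a_i) = a_i \cup \{x\}$ with $x \notin a_i$, and the covering relation $u(a_i) \succ a_{i+1}$ forces $a_{i+1} = u(a_i) \setminus \{w\}$ for a single element $w$. If $w = x$, or if $a_i = \emptyset$, then $a_{i+1} = a_i$, contradicting distinctness; hence $w \in a_i$ and $a_{i+1} = (a_i \setminus \{w\}) \cup \{x\}$ contains $x$, so $a_{i+1} \in \Delta_x$. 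But then the pair $(a_{i+1}, u(a_{i+1}))$ occurring in the cycle cannot lie in $M'$ (every simplex of $M'$ avoids $x$), and it cannot lie in $M_x$ either, since $a_{i+1}$, being the lower simplex of such a pair, would have to avoid $x$ — a contradiction. Hence every pair of the cycle lies in $M'$; consequently every $a_i$ and $u(a_i)$ lies in $\Delta'$, the relations $a_i \prec u(a_i)$ hold in $\Delta'$ by definition of $M'$, and the covering relations $u(a_i) \succ a_{i+1}$ of $\Delta$ remain covering relations of the subposet $\Delta'$. Thus the displayed sequence is a cycle of $M'$, contradicting its acyclicity, and we conclude that $M$ is acyclic.

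The only delicate points I anticipate are, in the first stage, confirming that a covering pair of $\Delta'$ is still a covering pair of $\Delta$, and, in the second, the degenerate cases $w = x$ and $a_i = \emptyset$ — both are routine. The load-bearing observation is the simple one that an $M_x$-step inside a cycle pushes $x$ into the next simplex, which can then be matched by neither $M'$ nor $M_x$.
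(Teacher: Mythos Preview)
The paper does not give its own proof of this lemma; it simply cites Jonsson's book. Your argument is correct and is essentially the standard element--matching argument: track the vertex $x$ through a putative cycle and observe that an $M_x$--move forces the next lower simplex $a_{i+1}$ to contain $x$, after which neither $M'$ (whose simplices avoid $x$) nor $M_x$ (whose lower simplices avoid $x$) can match it from below.

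One remark is worth making. You explicitly assume that $\Delta$ is a simplicial complex, whereas the paper only stipulates $\Delta \subseteq 2^X$. You use closure under subsets in two places: first to identify $\Delta'$ with $\{\sigma \in \Delta : x \notin \sigma \text{ and } \sigma \cup \{x\} \notin \Delta\}$, and second to verify that a covering pair of $\Delta'$ remains a covering pair of $\Delta$ (by producing $a\cup\{v\}\in\Delta$ as a subset of $u(a)$). Since every application of this lemma in the paper is to face posets of simplicial complexes, the extra hypothesis is harmless for the paper's purposes; just be aware that your write--up, as it stands, does not cover the full generality of the lemma as stated.
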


\begin{lem}[Cluster Lemma \cite{BB11}]\label{lem34}
If $ \varphi \ : \ P \rightarrow Q$ is an order-preserving map and for each $q \in Q$, the subposet $\varphi^{-1}(q)$ carries an acyclic matching $M_q$, then ${\underset{q \ \in \ Q }{\bigsqcup}} M_q $ is an acyclic matching on P.
\end{lem}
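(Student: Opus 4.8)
The plan is to verify the two defining properties of an acyclic matching for the disjoint union $M = \bigsqcup_{q \in Q} M_q$ directly, using order-preservation of $\varphi$ as the only structural input beyond the hypotheses placed on the individual matchings $M_q$.

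First I would check that $M$ is a well-defined partial matching on $P$. Since the fibers $\varphi^{-1}(q)$, $q \in Q$, partition $P$, and each $M_q$ is a matching on $\varphi^{-1}(q)$, every element of $P$ lies in a unique fiber and hence belongs to at most one pair of $M$; this is condition (ii). For condition (i), suppose $(a,b) \in M_q$, so that $b$ covers $a$ inside the subposet $\varphi^{-1}(q)$. I claim $b$ covers $a$ in $P$ as well: if there were $c \in P$ with $a \ \text{\textless} \ c \ \text{\textless} \ b$, then order-preservation of $\varphi$ forces $q = \varphi(a) \leq \varphi(c) \leq \varphi(b) = q$, so $\varphi(c) = q$ and $c \in \varphi^{-1}(q)$, contradicting that $b$ covers $a$ in the fiber. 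Thus every pair of $M$ is a covering pair in $P$, and $M$ is a partial matching.

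The substance of the argument is acyclicity. I would argue by contradiction, assuming $M$ admits a cycle as in \eqref{eq3.1},
\[
a_1 \ \prec \ u(a_1) \ \succ \ a_2 \ \prec \ u(a_2) \ \succ \ \cdots \ \prec \ u(a_m) \ \succ \ a_1,
\]
with the $a_i$ distinct and $m \geq 2$. Each matched pair $(a_i, u(a_i))$ lies in a single $M_{q_i}$, so $\varphi(a_i) = \varphi(u(a_i)) = q_i$. The covering relation $u(a_i) \succ a_{i+1}$ (indices mod $m$) gives $u(a_i) > a_{i+1}$ in $P$, whence $q_i = \varphi(u(a_i)) \geq \varphi(a_{i+1}) = q_{i+1}$ by order-preservation. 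Reading this around the cycle yields $q_1 \geq q_2 \geq \cdots \geq q_m \geq q_1$, forcing $q_1 = \cdots = q_m =: q$. Hence the entire cycle lies in the single fiber $\varphi^{-1}(q)$.

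It then remains to observe that this is a genuine cycle of $M_q$ in the induced subposet $\varphi^{-1}(q)$: each step $a_i \prec u(a_i)$ is a pair of $M_q$, and each ``down'' step $u(a_i) \succ a_{i+1}$, being a covering relation in $P$ between two elements of the fiber, remains a covering relation in the subposet (restricting to a subposet cannot create intermediate elements). This contradicts the acyclicity of $M_q$, and the contradiction establishes that $M$ has no cycles, completing the proof. I expect the only point requiring genuine care to be this interplay between covering relations in $P$ and in the fibers — both in establishing condition (i) and in justifying the ``down'' steps of the restricted cycle — where order-preservation is precisely what guarantees that a cycle cannot escape a single fiber and that the covering data transfers correctly between $P$ and $\varphi^{-1}(q)$.
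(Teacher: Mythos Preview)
Your proof is correct and is the standard argument for the Cluster Lemma. Note, however, that the paper does not supply its own proof of this statement: it is quoted as Lemma~\ref{lem34} with an attribution to~\cite{BB11} and no accompanying proof, so there is nothing in the paper to compare your argument against beyond observing that your write-up matches the usual proof found in the cited literature.
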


One of the main results of Discrete Morse Theory used in this article is :

\begin{thm}[Forman \cite{For98}]\label{lem35}
Let $ \Delta $ be a simplicial complex and M be an acyclic matching on the face poset of $\Delta $. Let $c_i$ denote the number of critical $i$-dimensional cells of $\Delta$.
The space $\Delta $ is homotopy equivalent to a cell complex $ \Delta _c$ with $c_i$ cells of dimension $i$ for each $i \geq 0$, plus a single $0$-dimensional cell in the case where the empty set is also paired in the matching.
\end{thm}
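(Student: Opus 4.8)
The statement is the fundamental theorem of discrete Morse theory, so the plan is to convert the purely combinatorial datum of the acyclic matching $M$ into a geometric filtration of $\Delta$ and read off the cell structure from the unmatched cells. First I would record the dictionary between acyclic matchings and discrete Morse functions: given $M$, I build a function $f$ on the face poset $P$ that strictly increases along every covering relation $\sigma \prec \rho$ \emph{except} when $(\sigma,\rho)\in M$, where it is permitted to weakly decrease. The existence of such an $f$ is equivalent to the acyclicity hypothesis \eqref{eq3.1}: the directed graph obtained from the Hasse diagram by reversing the matched covering edges is acyclic precisely when $M$ is acyclic, and any topological sort of that graph furnishes admissible values of $f$. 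Under this correspondence the critical cells of $f$ are exactly the $M$-unmatched cells, so the counts $c_i$ are preserved and it suffices to analyze $f$.

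Next I would study the level subcomplexes $\Delta(c)$, defined as the downward closure of $\{\sigma\in\Delta : f(\sigma)\le c\}$; taking the downward closure makes each $\Delta(c)$ a genuine subcomplex regardless of the values $f$ assigns on matched pairs, and it forces a matched pair $(\sigma,\tau)$ to enter the filtration together (once $\tau$ is included, its face $\sigma$ is pulled in). Letting $c$ run through values $c_0<c_1<\cdots<c_N$ separating the finitely many values of $f$ gives a filtration $\emptyset=\Delta(c_0)\subseteq\Delta(c_1)\subseteq\cdots\subseteq\Delta(c_N)=\Delta$. The homotopy analysis then rests on two local lemmas. \emph{(Collapse)} If the cells newly appearing in passing from $\Delta(c_{r-1})$ to $\Delta(c_r)$ form a single matched pair $(\sigma,\tau)$ with $\tau=\sigma\cup\{x\}$, then $\sigma$ is a free face of $\tau$ in $\Delta(c_r)$, so $\Delta(c_r)\searrow\Delta(c_{r-1})$ is an elementary collapse and the homotopy type is unchanged. \emph{(Attachment)} If instead a single critical cell $\gamma$ of dimension $p$ is added, then its proper faces have strictly smaller $f$-value and hence already lie in $\Delta(c_{r-1})$, so $\Delta(c_r)$ is obtained from $\Delta(c_{r-1})$ by attaching one $p$-cell along a map $\partial\gamma\to\Delta(c_{r-1})$, contributing exactly one $p$-cell up to homotopy.

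With these two lemmas the theorem follows by induction along the filtration: each step either preserves the homotopy type (a collapse) or attaches a single cell of the dimension of the corresponding critical cell. Hence $\Delta$ is homotopy equivalent to a CW complex $\Delta_c$ with exactly $c_i$ cells in dimension $i$ for every $i\ge 0$. The parenthetical clause is a bookkeeping point about the empty face: when $\emptyset$ is allowed in $P$ and is matched to some vertex $v$, neither $\emptyset$ nor $v$ is critical, so the count $c_0$ of critical $0$-cells may vanish; since a nonempty CW complex must contain at least one $0$-cell, I reinstate a single $0$-cell as a basepoint, which is the stated correction.

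The main obstacle is the freeness claim inside the \emph{(Collapse)} lemma, and this is precisely where acyclicity is indispensable. One must verify that at the moment the pair $(\sigma,\tau)$ enters the filtration, $\tau$ is the \emph{unique} coface of $\sigma$ present in $\Delta(c_r)$ — equivalently, that no other coface of $\sigma$, and no face-forcing chain of earlier matched pairs, has been scheduled in a way that destroys freeness. A matching containing a cycle as in \eqref{eq3.1} would create exactly such a circular scheduling obstruction; it is the acyclicity that guarantees a consistent linear extension of the pairs and therefore a valid sequence of elementary collapses. Once freeness is secured, each collapse is a standard deformation retract and the remaining assembly is routine induction.
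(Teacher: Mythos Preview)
The paper does not supply a proof of this statement: Theorem~\ref{lem35} is quoted from Forman~\cite{For98} as a black-box tool from discrete Morse theory, with no argument given in the paper itself. There is therefore nothing in the paper to compare your argument against beyond the bare citation.

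For what it is worth, your sketch is the standard route to Forman's theorem as found in~\cite{For98} and in textbook treatments such as~\cite{Koz07} and~\cite{Jon08}: convert the acyclic matching into a discrete Morse function via a linear extension of the modified Hasse diagram, filter $\Delta$ by sublevel subcomplexes, and show that each step of the filtration is either an elementary collapse (when a matched pair enters) or a genuine cell attachment (when a critical cell enters). Your identification of the freeness check in the collapse step as the technical heart, and of acyclicity as precisely the hypothesis that rules out the scheduling obstruction, is correct. The only caveat is that your argument is a sketch: making the filtration step rigorous requires arranging that at most one matched pair or one critical cell appears at each level, which one does by perturbing $f$ so that distinct cells take distinct values; this is routine but should be said explicitly in a full proof.
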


This result gives the following corollaries.

\begin{cor}[\cite{BB11}]\label{rem36}
If an acyclic matching has critical cells only in a fixed dimension $i$, then $\Delta$ is homotopy equivalent to a wedge of $i$-dimensional spheres.
\end{cor}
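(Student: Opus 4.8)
The plan is to read off this statement directly from Forman's theorem (Theorem \ref{lem35}) by analyzing the CW structure it produces. By hypothesis the acyclic matching $M$ has critical cells only in dimension $i$; write $c_i$ for their number. First I would invoke Theorem \ref{lem35} to obtain a CW complex $\Delta_c \simeq \Delta$ whose cells are exactly the $c_i$ critical $i$-cells together with a single $0$-cell. The single $0$-cell is guaranteed as follows: the empty face $\emptyset$ sits in dimension $-1 \neq i$, hence is not critical and must be matched, so by the final clause of Theorem \ref{lem35} a single $0$-dimensional cell is adjoined. Either way $\Delta_c$ has precisely one vertex.

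The next step is to identify the skeleta of $\Delta_c$. Since $M$ has no critical cells in any dimension other than $i$, the complex $\Delta_c$ has no cells in dimensions $1,2,\ldots,i-1$ and (for $i \geq 1$) no critical $0$-cells. Consequently the $(i-1)$-skeleton $\Delta_c^{(i-1)}$ reduces to the single $0$-cell, i.e. a point. Each of the $c_i$ cells of dimension $i$ is then attached to $\Delta_c^{(i-1)} = \{pt\}$ by a map $S^{i-1} \cong \partial D^i \to \{pt\}$ of its boundary sphere; because the target is a point, every such attaching map is constant.

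Finally, attaching a single $i$-cell to a point along the constant map $S^{i-1} \to \{pt\}$ gives the quotient $D^i/\partial D^i \cong S^i$. Performing all $c_i$ attachments at the common vertex therefore yields the wedge $\bigvee_{c_i} S^i$, whence $\Delta \simeq \Delta_c \cong \bigvee_{c_i} S^i$ is a wedge of $c_i$ spheres of dimension $i$, as claimed. The degenerate case $i = 0$ is handled separately: there $\Delta_c$ is a discrete set of $c_0 + 1$ points, which is exactly a wedge of $c_0$ copies of $S^0$.

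I expect the only delicate point to be the bookkeeping of the basepoint: one must ensure that $\Delta_c$ has a single vertex, so that all the boundary spheres attach to one point rather than to a larger $0$-skeleton (which could produce a more complicated space than a wedge). This is precisely what the ``empty set is paired'' clause of Theorem \ref{lem35} supplies, and confirming that $\emptyset$ is indeed matched whenever $i \geq 0$ is the small verification that makes the argument go through.
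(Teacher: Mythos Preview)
The paper does not prove this corollary; it is quoted from \cite{BB11} as a known consequence of Theorem~\ref{lem35}. Your derivation is the standard one and is correct: Theorem~\ref{lem35} yields a CW complex $\Delta_c$ with $c_i$ cells in dimension $i$ and, since the empty face sits in dimension $-1\neq i$ and must therefore be matched, one additional $0$-cell; a CW complex with a single vertex and all remaining cells in a fixed dimension $i$ is necessarily a wedge of $c_i$ copies of $S^i$, and your treatment of the boundary case $i=0$ is also correct.
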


\begin{cor}[\cite{BZ14}]\label{rem37}
If the critical cells of an acyclic matching on $\Delta$ form a subcomplex $\Delta'$ of $\Delta$, then $\Delta $ simplicially collapses to $\Delta'$, implying that $\Delta'$ is a deformation retract of $\Delta$.
\end{cor}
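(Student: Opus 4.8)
The plan is to derive the collapse directly from the definition of an acyclic matching, realizing it as an explicit sequence of elementary simplicial collapses; this is the discrete-Morse-theory mechanism behind Theorem~\ref{lem35}, but it must be carried out so as never to disturb the critical subcomplex. Write $\Delta'\subseteq\Delta$ for the subcomplex of critical cells. The non-critical cells of $\Delta$ are partitioned into matched pairs $(\sigma,u(\sigma))$, where $u(\sigma)=\sigma\cup\{x\}$ for some vertex $x$ and $\dim u(\sigma)=\dim\sigma+1$. I would induct on the number of non-critical cells: if there are none then $\Delta=\Delta'$ and there is nothing to prove.

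For the inductive step, the key claim is: \emph{if $\Delta\neq\Delta'$ then some matched pair $(\sigma,\tau)$ has $\sigma$ a free face of $\tau$ in $\Delta$}, i.e.\ $\tau$ is the unique maximal simplex of $\Delta$ containing $\sigma$. Granting this, $\Delta$ collapses onto $\widetilde\Delta:=\Delta\setminus\{\sigma,\tau\}$ (the only simplices $\gamma$ with $\sigma\subseteq\gamma\subseteq\tau$ are $\sigma$ and $\tau$). Then $\widetilde\Delta$ is again a simplicial complex; the restriction of $M$ to its face poset equals $M\setminus\{(\sigma,\tau)\}$, which is still acyclic because a cycle as in \eqref{eq3.1} living in a subposet would already be a cycle in $P(\Delta)$; the critical cells of this restriction are still exactly the cells of $\Delta'$, since we removed a matched pair and no critical cell; and $\Delta'$ is still a subcomplex of $\widetilde\Delta$ because $\sigma,\tau\notin\Delta'$. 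Hence the inductive hypothesis applies to $\widetilde\Delta$ and yields $\widetilde\Delta\searrow\Delta'$; splicing the collapses gives $\Delta\searrow\Delta'$. Finally, a simplicial collapse is a strong deformation retraction, so $\Delta'$ is a deformation retract of $\Delta$.

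To prove the claim, let $d$ be the largest dimension occurring in a matched pair, so every non-critical cell has dimension at most $d$. On the set of $(d-1)$-cells matched upward, form a digraph with an arc $a\to u(a)$ for each such $a$, together with an arc $\rho\to a$ whenever $\rho$ is a $d$-cell matched downward, $a\subsetneq\rho$, and $a\neq d(\rho)$. Acyclicity of $M$ rules out directed cycles in this digraph, while every downward-matched $d$-cell $\rho$ already carries the incoming arc $d(\rho)\to\rho$; since some matched pair has its upper element of dimension $d$, the digraph is nonempty and therefore has a source $\sigma$, whose only downward-matched $d$-coface is $\tau:=u(\sigma)$. Now observe: (i) $\sigma$ is non-critical, so (as $\Delta'$ is a subcomplex) no simplex containing $\sigma$ lies in $\Delta'$, i.e.\ every simplex containing $\sigma$ is non-critical; (ii) non-critical cells have dimension at most $d$, so $\sigma$ has no coface of dimension greater than $d$; (iii) any $d$-cell $\rho$ with $\sigma\subsetneq\rho$ is non-critical by (i), hence matched, and cannot be matched upward (its partner would have dimension $d+1$), so it is matched downward, which forces $\rho=\tau$ by the source property. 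Combining (i)--(iii), $\tau$ is the unique simplex properly containing $\sigma$, so $\sigma$ is a free face of $\tau$.

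The step I expect to be the main obstacle is identifying the correct top dimension at which to extract a collapsible pair: one must work at the maximal dimension among \emph{matched} pairs, not among all simplices, and it is precisely the hypothesis that the critical cells form a \emph{subcomplex} that prevents a critical coface from blocking the freeness of $\sigma$ in (i)--(iii); acyclicity is then exactly what produces the source. Checking that the three structural properties (acyclic matching, unchanged set of critical cells, and $\Delta'$ remaining a subcomplex) survive each elementary collapse is routine, but it is what makes the induction close.
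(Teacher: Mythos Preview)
The paper does not prove this corollary; it is quoted from \cite{BZ14} as a known consequence of discrete Morse theory, so there is no in-paper argument to compare against. Your proof is correct and is the standard one: induct on the number of matched pairs, and at each stage locate an elementary collapsible pair $(\sigma,\tau)$ at the top matched dimension $d$ by using acyclicity to extract a source in the modified Hasse digraph, with the subcomplex hypothesis on $\Delta'$ guaranteeing that no critical coface can obstruct freeness of $\sigma$.

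One expository point worth tightening: when you introduce the auxiliary digraph, state its vertex set explicitly as the disjoint union of the $(d{-}1)$-cells matched upward and the $d$-cells matched downward. As written, the phrase ``on the set of $(d-1)$-cells matched upward, form a digraph \ldots\ together with an arc $\rho\to a$'' leaves the status of $\rho$ as a vertex slightly implicit; once the vertex set is explicit, your remark that every $d$-cell carries the incoming arc $d(\rho)\to\rho$ cleanly forces the source to lie among the $(d{-}1)$-cells, and the rest of (i)--(iii) goes through exactly as you wrote.
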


In this article, $\si \sm\{ ij\}$ and $\si \cup\ \{ ij\}$ will indicate that $ij \in \si$ and $ij \nin \si$, respectively. For brevity, we use $\si \sm ij$ and $\si \cup\  ij$  instead of $\si \sm\{ ij\}$ and $\si \cup\ \{ ij\}$, respectively. Further, $\{a_1 <a_2<a_3    < \ldots < a_{k}\}$ will denote a chain.

\section{{ \bf Proof of Theorem \ref{thm2}}}

Let $W_k^1=\mathcal{N}(S_{2,k})$, $W_k^{l+1}=\mathcal{N}(S_{2,k}\sm \{12,23, \ldots , l(l+1)\})$, $1\leq l \leq k+3$ and $W_k^{k+5} = \mathcal{N}(SG_{2,k})$. We first prove that $W_k^l$ collapses onto $W_k^{l+1}$, $1 \leq l \leq k+4$.
\smallskip

\begin{lem}\label{lem1}
 $ W_k^l \searrow A_{k}^l$, where $A_k^l=\{\si \in W_k^l \ | \ \ C_\sigma \neq \{l,l+1\}\}$, $1\leq l \leq k+4$.
\end{lem}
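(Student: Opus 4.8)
The plan is to produce an acyclic matching on the face poset of $W_k^l$ whose unmatched cells are exactly the simplices of $A_k^l$, and then invoke Corollary \ref{rem37} (Benedetti--Hachimori--Bz\'or\'enyi-style consequence of Forman's theorem) to conclude the collapse. Concretely, I want to match every simplex $\sigma \in W_k^l$ with $C_\sigma = \{l,l+1\}$ to another such simplex, in a way that has no cycles; since $A_k^l$ consists precisely of the complementary set of simplices, and it is clearly a subcomplex (if $C_\sigma \neq \{l,l+1\}$ and $\tau \subseteq \sigma$, then $C_\tau \supseteq C_\sigma$, so $C_\tau$ is strictly larger than a two-element set unless it equals $\{l,l+1\}$ — one has to check the inclusion $C_{\sigma\sm\{ab\}}\subseteq C_\sigma\cup\{a,b\}$ recorded in the preliminaries actually forces $C_\tau\neq\{l,l+1\}$ once $C_\sigma\supsetneq\{l,l+1\}$, which is immediate because removing a vertex can only add back the two deleted labels), Corollary \ref{rem37} applies.

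The natural device is the element matching from Lemma \ref{pac} and Lemma \ref{lem33}. Choose a specific vertex $x$ of $KG_{2,k}$ to match along — the candidate is the edge-vertex $x = l(l+3)$, or whichever vertex of length $3$ adjacent (in $S_{2,k}$) to all of $l,l+1$ when they are the only uncovered points; the point is that if $C_\sigma = \{l,l+1\}$ then $\sigma \cup \{x\}$ should still be a simplex of $W_k^l$ because $x$, together with the other vertices of $\sigma$, still has a common neighbor. The set $\Delta_x = \{\sigma : \sigma \sm x, \sigma\cup x \in W_k^l\}$ then carries the perfect acyclic matching $M_x$ by Lemma \ref{pac}. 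I would then argue that $\Delta_x$ is exactly (or at least contains) the set of simplices with $C_\sigma \in \{\{l,l+1\},\{l,l+1\}\cup(\text{something})\}$ that we wish to remove, and that the leftover simplices $W_k^l \sm \Delta_x$ either already lie in $A_k^l$ or can be cleaned up by a further matching $M'$ on $W_k^l\sm\Delta_x$ using Lemma \ref{lem33}; the Cluster Lemma \ref{lem34} with the order-preserving map $\sigma \mapsto$ (some monotone function of $C_\sigma$, e.g. $C_\sigma \cap \{l,l+1\}$ or $|C_\sigma|$) is the right tool to keep the combined matching acyclic while isolating the fibre over $C_\sigma = \{l,l+1\}$.

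The main obstacle I anticipate is verifying that the chosen matching vertex $x$ really witnesses $\sigma\cup x \in W_k^l$ for every $\sigma$ with $C_\sigma=\{l,l+1\}$ — i.e., checking that a single vertex of $S_{2,k}\sm\{12,23,\dots,l(l+1)\}$ is simultaneously "non-adjacent-to-nothing-but" $l$ and $l+1$, equivalently that $N_{S_{2,k}^{(l)}}(S_\sigma\cup x)$ is non-empty. This is where the removal of the edges $12,23,\dots,l(l+1)$ matters: one must confirm the deleted edges never destroy the needed common neighbor, which forces a short case analysis on the length of $x$ and on whether the common neighbor is stable. A secondary obstacle is the acyclicity bookkeeping when $C_\sigma$ is allowed to be a proper superset of $\{l,l+1\}$ in $\Delta_x$: one has to be sure the Cluster Lemma fibres are chosen so that no fibre simultaneously contains a matched pair and a critical cell that should survive. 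Once the right $x$ and the right order-preserving "size of the complement" map are pinned down, the rest is the routine verification that $M_x$ plus the trivial (empty) matching on the remaining poset has critical set exactly $A_k^l$, and Corollary \ref{rem37} finishes the collapse.
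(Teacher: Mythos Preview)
Your high-level strategy matches the paper's: isolate the fibre $\{\sigma : C_\sigma = \{l,l+1\}\}$ via an order-preserving map, build a perfect acyclic matching on that fibre, and invoke Corollary~\ref{rem37}. But two concrete things go wrong.

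First, your candidate matching vertex $x = l(l+3)$ cannot work: it contains $l$, so it is not disjoint from $\{l,l+1\}$ and therefore is \emph{not} in $N(l(l+1))$. Every $\sigma$ in the fibre lies inside $N(l(l+1))$ (the only possible common neighbour when $C_\sigma=\{l,l+1\}$), so $\sigma\cup x$ would fall out of $W_k^l$ entirely. The paper instead matches on $x=(l+2)(l+k+3)\equiv(l-1)(l+2)$, which is stable and disjoint from $\{l,l+1\}$.

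Second, and more seriously, a single element matching $M_x$ is \emph{never} perfect on this fibre for $k\ge 3$. If $x\in\sigma$ and $x$ is the only element of $\sigma$ containing $l+2$ (or $l+k+3$), then $C_{\sigma\sm x}\supsetneq\{l,l+1\}$, so $\sigma\sm x\in A_k^l$ and $\sigma$ is left unmatched. Your proposal waves at ``a further matching $M'$'' to clean these up, but gives no construction. This is exactly where the work lies: the paper shows the leftover decomposes as $\Delta_1^l\sqcup\Delta_2^l\sqcup\Delta_3^l$ according to whether $l+2$, $l+k+3$, or both are uncovered by $\sigma\sm x$, exhibits explicit bijections from each $\Delta_i^l$ to the fibre $(\varphi_{k'}^1)^{-1}(v_{k'}^1)$ for some $k'<k$, and uses \emph{induction on $k$} to import perfect acyclic matchings on those smaller fibres. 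A further Cluster Lemma argument (with a four-element target poset) then glues $M_x$ and the three imported matchings acyclically. None of this inductive machinery appears in your plan, and without it the fibre cannot be perfectly matched.
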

\begin{proof}
In  $S_{2,k}$, $v \sim l(l+1) \rar v \in V(SG_{2,k})$.
Observe that $A_{k}^l$ is a subcomplex of $W_k^l$.
Define $\varphi_k^l : \mathcal{F}(W_k^l) \longrightarrow \{u_k^l < v_k^l\}$  to be the map \\
\[
  \varphi_k^l(\sigma) = \left\{\def\arraystretch{1.2}%
  \begin{array}{@{}c@{\quad}l@{}}
    u_k^l & \text{if} ~~C_{\si}\neq\{l,l+1\},  \\
    v_k^l & \text{if} ~~C_{\si} = \{l,l+1\}.\\
  \end{array}\right.
\]
Let  $\tau \subseteq \si$.  If $\varphi_{k}^l(\si) = u_{k}^l$, then $C_{\si}\neq \{l,l+1\}$. Hence,  $C_{\tau}\neq \{l,l+1\}$ and $\varphi_{k}^l(\tau) = u_{k}^l$. $\varphi_k^l(\sigma)=v_k^l \rar \varphi_{k}^l(\tau) \leq \varphi_{k}^l(\si)$, always. Therefore, $\varphi_k^l$ is a poset map.

Observe that, $\si \in (\varphi_k^l)^{-1}(v_k^l) \rar \si \subseteq N(l(l+1))$.
Our aim now is to define a perfect acyclic matching on $(\varphi_k^l)^{-1}(v_k^l)$. From Corollary \ref{rem37},  we can then conclude that $W_k^l$ collapses onto $A_k^l$, thereby proving this Lemma.

If $k=1$, $N(l(l+1))=\{(l+2)(l+4)\}$. Since $C_{N(l(l+1))}\neq \{l,l+1\}$, we see that $(\varphi_1^l)^{-1}(v_1^l) $ is empty.

If $k=2$, then $(\varphi_2^l)^{-1}(v_2^l)=\{\alpha=\{(l+2)(l+4),(l+3)(l+5)\}, \{(l+2)(l+4),(l+3)(l+5),(l+2)(l+5)\}\}$. 
Thus, $M_{(l+2)(l+5)} =\{(\alpha, \alpha \cup \ (l+2)(l+5))\}$ is a perfect acyclic matching on $(\varphi_2^l)^{-1}(v_2^l)$.

Inductively, let us assume that $N_k^l$ is a perfect acyclic matching on $(\varphi_k^l)^{-1}(v_k^l)$ for all $1 \leq k \leq r-1  $, $r \geq 3$.

 Now, consider the case $k=r$. By Lemma \ref{pac}, $M_{(l+2)(l+r+3)}$ is an acyclic matching on $(\varphi_r^l)^{-1}(v_r^l)$. Let $\sigma \in (\varphi_r^l)^{-1}(v_r^l) $ and $\sigma \notin \Delta_{(l+2)(l+r+3)}$. 
 
 If $(l+2)(l+r+3) \nin \sigma$, then $C_{\si}= \{l,l+1\} \rar C_{\si \cup \ (l+2)(l+r+3)}=\{l,l+1\} \rar \si \in \Delta_{(l+2)(l+r+3)}$, a contradiction. Thus, $(l+2)(l+r+3) \in \si$, $C_{\si \sm (l+2)(l+r+3)} \neq \{l,l+1\}$ and  $(\varphi_{r}^l)^{-1}(v_{r}^l) \sm \Delta_{(l+2)(l+r+3)} =\Delta_{1}^l \cup \ \Delta_{2}^l \cup \ \Delta_{3}^l$, where

\begin{enumerate}
\item $\Delta_{1}^l=\{\si \in (\varphi_{r}^l)^{-1}(v_{r}^l)  \ | \ \{l+2,l+r+3\} \cap S_{\sigma \setminus (l+2)(l+r+3)}\ = \emptyset\}$,
\item $\Delta_{2}^l=\{\si \in (\varphi_{r}^l)^{-1}(v_{r}^l) \ | \ C_{\si \sm (l+2)(l+r+3)}=\{l,l+1,l+r+3\}\}$ and
\item $\Delta_{3}^l =\{\si \in (\varphi_{r}^l)^{-1}(v_{r}^l)  \ | \ C_{\si \sm (l+2)(l+r+3)}=\{l,l+1,l+2\}\}$.
\end{enumerate}

\begin{claim} \label{psec51} For $1 \leq l \leq r+4,$ the following maps are bijective:
\begin{enumerate}
\item[(i)] $f_l : (\varphi_{r-2}^1)^{-1}(v_{r-2}^1) \rightarrow \Delta_{1}^l$ defined by $f_l(\tau)=\{\tau\oplus l\} \cup \ \{(l+2)(l+r+3)\}$, $r >3$.
\item[(ii)] $g_l:  (\varphi_{r-1}^1)^{-1}(v_{r-1}^1) \rightarrow \Delta_{2}^l$ defined by $g_l(\tau)=\{\tau \oplus(l-1)\} \cup \ \{(l+2)(l+r+3)\}$, $r \geq 3$.
\item[(iii)] $h_l:  (\varphi_{r-1}^1)^{-1}(v_{r-1}^1) \rightarrow \Delta_{3}^l$ defined by $h_l(\tau)=\{\tau\oplus l\} \cup \ \{(l+2)(l+r+3)\}$, $r \geq 3$.
\end{enumerate}
\end{claim}
Observe that if $r=3$, then both $\Delta_{1}^l$ and $(\varphi_{1}^1)^{-1}(v_1^1)$ are empty sets.
 For any $\tau \in (\varphi_{r-2}^1)^{-1}(v_{r-2}^1)$, we see that $\tau \in W_r^l$ and $ C_\tau = \{1,2,r+3,r+4\} \rar C_{\tau \oplus l} =\{l, l+1, l+2, l+r+3\}$ (since $l+r+4 \equiv l (\text{mod} \ r+4)$). Thus, $ \{\tau \oplus l\} \cup  \ (l+2)(l+r+3) \in \Delta_{1}^l$.
$f_l$ is surjective because $\sigma \in \Delta_1^l \rar C_{\si \sm (l+2)(l+r+3)}=\{l,l+1,l+2,l+r+3\} \rar C_{(\si \sm (l+2)(l+r+3)))\ominus l} =\{1,2,r+3,r+4\} \rar (\si \sm (l+2)(l+r+3))\ominus l =\tau \in (\varphi_{r-2}^1)^{-1}(v_{r-2}^1) \rar \sigma = \{(l+2)(l+r+3)\} \cup \ \{\tau \oplus l\}$. $f$ is clearly injective and this proves Claim \ref{psec51}($(i)$). The proofs of $(ii)$ and $(iii)$ are similar.

Since, $N_{r-2}^1$ is a perfect acyclic matching  on $(\varphi_{r-2}^1)^{-1}(v_{r-2}^1)$, from Claim \ref{psec51}$(i)$, we observe that $M_1^l=\{(f_l(\tau), f_l(\tau')) \ | \ (\tau, \tau') \in N_{r-2}^1\}$ is a perfect acyclic matching on $\Delta_1^l$.

Similarly, we construct  perfect acyclic matchings $M_2^l$ and $M_3^l$ on $\Delta_2^l$ and $\Delta_3^l$ respectively.
Since, $(\varphi_r^l)^{-1}(v_r^l) = \Delta_{1}^l \cup \ \Delta_{2}^l \cup \ \Delta_{3}^l \cup \ \Delta_{(l+2)(l+r+3)}$, define 
$\theta_l : (\varphi_r^l)^{-1}(v_r^l) \longrightarrow \{ q_1^l < q_2^l < q_3^l < q_0^l \}$ by
\[
  \theta_{l}(\sigma) = \left\{\def\arraystretch{1.2}%
  \begin{array}{@{}c@{\quad}l@{}}
   q_i^l & \text{if $\sigma \in \Delta_i^l, \ 1\leq i \leq 3$ },\\
   q_{0}^{l}& \text{if $\sigma \in \Delta_{(l+2)(l+r+3)}$.}
  \end{array}\right.
\]

Let $\sigma , \tau \in (\varphi_r^l)^{-1}(v_r^l)$ and $\tau \subseteq \sigma$. 

If $\tau \in \theta_l^{-1}(q_0^l)$, then $C_{\tau \sm (l+2)(l+r+3)}=C_\tau$. Thus, $C_{\si \sm (l+2)(l+r+3)}=C_\si$ and therefore,  $\si \in \theta_l^{-1}(q_0^l)$.

  If $\sigma \in \Delta_i^l$, then by the definition of $\Delta_i^l$, $\tau \notin \Delta_j^l$ for any $j > i$. Thus, $\theta_l$ is a poset map. Hence, $M_{(l+2)(l+r+3)} \cup  \ M_1^l \cup  \ M_2^l \cup  \ M_3^l$ is a perfect acyclic matching on $(\varphi_r^l)^{-1}(v_r^l)$. 
  \end{proof}
  
We now show that for each $l \in \{1, \ldots , k+4\}$, $ A_{k}^l \searrow  W_k^{l+1}$.

\begin{lem} \label{lem2}
$ A_{k}^l \searrow  W_k^{l+1}$, $1\leq l \leq k+4$.
\end{lem}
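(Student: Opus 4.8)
The plan is to exhibit a perfect acyclic matching on the subposet of $A_k^l$ consisting of faces that do \emph{not} survive in $W_k^{l+1}$, and then invoke Corollary \ref{rem37}. Recall that $W_k^{l+1} = \mathcal N(S_{2,k}\sm\{12,\dots,l(l+1)\})$, so passing from $A_k^l$ to $W_k^{l+1}$ amounts to deleting the edge $l(l+1)$ from the graph $A_k^l$ lives on. The faces of $A_k^l$ that disappear are exactly those $\si$ whose \emph{only} common neighbour in the current graph is the vertex $l(l+1)$ itself; equivalently $\si\subseteq N(l(l+1))$ but $\si$ has no common neighbour among the remaining vertices. Since in $S_{2,k}$ (and in each intermediate graph) a vertex adjacent to $l(l+1)$ must be stable, $N(l(l+1))$ consists of stable vertices avoiding $\{l,l+1\}$, and a common neighbour of $\si$ other than $l(l+1)$ would be a vertex $v$ with $v\cap S_\si=\emptyset$ and $v\neq l(l+1)$; the faces to be removed are precisely those $\si\subseteq N(l(l+1))$ with $C_\si = \{l,l+1\}$ \emph{together with} those for which $l(l+1)$ is forced — but note $A_k^l$ already excludes $C_\si=\{l,l+1\}$, so what is being removed is the collection of $\si$ with $l(l+1)\in N(S_\si)$ and $l(l+1)$ not removable, i.e. those $\si$ lying in the deleted star. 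I would first pin down this set $D_k^l := A_k^l\sm W_k^{l+1}$ cleanly as $\{\si : S_\si \subseteq [k+4]\sm\{l,l+1\},\ |C_\si|\ge 2,\ \text{every common neighbour of }\si\text{ meets }\{l,l+1\}\text{ or equals }l(l+1)\}$ — concretely $\si$ such that $\{l,l+1\}\subseteq C_\si$, hence $l(l+1)\in N(\si)$, but $C_\si\neq\{l,l+1\}$ so there is slack, and the deletion of edge $l(l+1)$ kills exactly the faces for which removing $l(l+1)$ from the neighbour list empties it; spelling out which $\si$ these are is the first technical step.

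Next, on $D_k^l$ I would build the matching by an element-matching in the spirit of Lemma \ref{pac}: pick a distinguished vertex $w=w(l)$ of $KG_{2,k}$ that does not meet $\{l,l+1\}$ and is adjacent to $l(l+1)$ in the ambient Kneser graph — a natural choice is the stable vertex $(l+2)(l+3)$ or, mirroring the previous lemma, $(l+2)(l+k+3)$ — and set $M_w=\{(\si\sm w,\si\cup w)\}$ over those $\si$ for which both $\si\sm w$ and $\si\cup w$ lie in $D_k^l$. The key point to verify is that $M_w$ is a \emph{perfect} matching on $D_k^l$: if $\si\in D_k^l$ and $w\notin\si$, then adjoining $w$ keeps $S_\si$ inside $[k+4]\sm\{l,l+1\}$ (since $w$ avoids $\{l,l+1\}$) and shrinks $C_\si$ by at most the two coordinates of $w$, so $C_{\si\cup w}$ still contains $\{l,l+1\}$ and is still $\neq\{l,l+1\}$ provided $|C_\si|\ge 4$ or $w\subseteq C_\si$ fails — here one has to be a little careful exactly as in Lemma \ref{lem1}, possibly splitting off the handful of $\si$ where $C_\si$ is small and matching those by a secondary vertex, then gluing the pieces with the Cluster Lemma (Lemma \ref{lem34}) through an order-preserving map $\theta_l$ into a short chain, just as $\theta_l$ was used above. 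Acyclicity of $M_w$ is automatic from Lemma \ref{pac}; acyclicity of the combined matching follows from Lemma \ref{lem33} and the Cluster Lemma.

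I would then conclude: the matching has no critical cells in $D_k^l$ (it is perfect there), so extending it by the empty matching on $W_k^{l+1}$ gives an acyclic matching on $A_k^l$ whose critical cells are precisely $W_k^{l+1}$, a subcomplex; Corollary \ref{rem37} yields $A_k^l\searrow W_k^{l+1}$. The small-$k$ base cases ($k=1,2$) should be checked directly, since there $N(l(l+1))$ is tiny and the generic matching vertex may not exist — exactly the pattern of Lemma \ref{lem1}. The main obstacle, as in the previous lemma, is the bookkeeping around the \emph{boundary} faces of the deleted star: those $\si$ with $|C_\si|$ just barely larger than $2$, where adding the chosen vertex $w$ would make $C_\si$ equal to $\{l,l+1\}$ and thus push $\si$ out of $A_k^l$; handling these requires either choosing $w$ adaptively or isolating them into a $\Delta_i^l$-type piece and matching them with an auxiliary vertex, and showing these auxiliary pieces are themselves perfectly matched (likely by an inductive identification with a smaller $(\varphi^1)^{-1}(v^1)$, reusing Claim \ref{psec51}). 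Once that combinatorial decomposition is set up, everything else is an application of the Morse-theoretic lemmas already in hand.
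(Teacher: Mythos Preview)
Your identification of $D_k^l := A_k^l \sm W_k^{l+1}$ is wrong, and this undermines the whole plan. You describe $D_k^l$ as consisting of faces $\si$ with $S_\si \subseteq [k+4]\sm\{l,l+1\}$ (equivalently $\{l,l+1\}\subseteq C_\si$), i.e.\ faces lying \emph{inside} $N(l(l+1))$. But Lemma~\ref{lem1} has already removed from $W_k^l$ every face whose only common neighbour is $l(l+1)$: that is precisely the passage $W_k^l\searrow A_k^l$. What remains to be removed in Lemma~\ref{lem2} are the faces that \emph{contain the vertex} $l(l+1)$ itself. Deleting $l(l+1)$ from the graph is a vertex deletion, not an edge deletion; it kills both the faces with $N(\si)=\{l(l+1)\}$ (handled in Lemma~\ref{lem1}) and the faces with $l(l+1)\in\si$ (the job of Lemma~\ref{lem2}). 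This is exactly Claim~\ref{c5.2} in the paper: $\si\in A_k^l\sm W_k^{l+1}\Rightarrow l(l+1)\in\si$, hence $l,l+1\in S_\si$ and $l,l+1\notin C_\si$ --- the opposite of your description.

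Consequently your matching candidates $(l+2)(l+3)$ or $(l+2)(l+k+3)$ cannot work: these avoid $\{l,l+1\}$, so they are vertices one might add to a face in $N(l(l+1))$, not to a face already containing $l(l+1)$. The paper instead filters $D_k^l$ by the maximum length of a stable common neighbour (the sets $\B_n^l$) and at each level matches with vertices of the form $(l+1)r$, which contain $l+1$ and so can sit alongside $l(l+1)$ in a simplex. The filtration is needed because adding $(l+1)r$ can change the set of available common neighbours; the cluster-lemma bookkeeping you anticipate is indeed present, but it is organised around this length filtration rather than around the size of $C_\si$. Once you correct the description of $D_k^l$, the natural matching element is a vertex \emph{sharing} a coordinate with $l(l+1)$, and the argument proceeds as in the paper.
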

\begin{proof}
  $S_{{2,k}}\sm \{12, 23 , \ldots , l(l+1)\}$ is a subgraph of $S_{2,k}\sm \{12, 23 , \ldots , (l-1)l\}$ and  $W_k^{l+1}$ is a subcomplex of  $W_k^l$.  $ \si \in W_k^{l+1} \rar \exists \, v \in V(S_{2,k}) \sm \{12, 23 , \ldots , l(l+1)\}$ such that $ v \in N(\si)$, {\it i.e.} $C_{\si} \neq \{l,l+1\}$, thereby showing that $W_k^{l+1}$ is a sub-complex of $A_k^l$.

\begin{claim}\label{c5.2}
$\si \in A_{k}^l \sm W_k^{l+1} \rar l(l+1) \in \si$.
\end{claim}

Assume that $l(l+1) \nin \si$.  If $\si \in A_{k}^l$, then $C_{\si}\neq \{l,l+1\}$. Hence, there exists a vertex $v \neq l(l+1)$, such that $\si \subseteq N(v)$. Further, $l(l+1) \nin \si$ implies that $ \si \in W_k^{l+1}$, a contradiction. This proves Claim \ref{c5.2}.

For $1 \leq n \leq k+1$, define $\mathcal{A}_n = \{\si \subset N(v) \subset V(S_{2,k}) \ | \ 12 \in \si, l(v) \geq n+1  \}$ and $\B_n^l = W_k^{l+1}  \cup \  \{ \si \oplus (l-1) \ | \ \si \in \mathcal{A}_n \}$. Clearly, $\B_1^l = A_k^l$ and $ \B_{k+1}^l = W_k^{l+1}$.
For $ 1 \leq i \leq k $, define $\psi_i^l : \mathcal{F}(\B_i^l) \longrightarrow \{\alpha_i^l < \beta_i^l\} $ by
\[
  \psi_i^l(\sigma) = \left\{\def\arraystretch{1.2}%
  \begin{array}{@{}c@{\quad}l@{}}
    \alpha_i^l & \text{if $\sigma \in \B_{i+1}^l ,$ }\\
    \beta_i^l & \text{otherwise.}
  \end{array}\right.
\]

Let $\sigma, \tau \in \mathcal{F}(\B_i^l)$, $\tau \subseteq \si$ and $\psi_i^l(\tau)=\beta_i^l$.  Here, $\tau \nin \mathcal{F}(\B_{i+1}^l)$, which implies that $12 \in \tau\ominus (l-1)$ and $\tau \ominus (l-1) \subset N(v)$, with $l(v)=i+1$. Now, $\tau \subset \si \rar 12\in \si\ominus (l-1) \rar \si \nin  W_k^{l+1}$. If $\si\ominus (l-1) \subset N(v)$, where $v=lm$ with $|l-m|>i+1$, then $l, m \in C_{\tau\ominus (l-1)}$. This implies that $\tau\ominus (l-1) \subset N(lm)$, a contradiction. Thus $\psi_{i}^l(\si)=\beta_{i}^l$ and therefore, $\psi_i^l$ is a poset map.

Let $i\in \{1,\dots k\}$ and  $\sigma \in  (\psi_i^l)^{-1}(\beta_i^l)$.  Here, $\sigma \in  (\psi_i^l)^{-1}(\beta_i^l)$ implies that $\si \nin \mathcal{F}(\B_{i+1}^l)$, {\it i.e.} $12 \in \si\ominus (l-1)$ and  $\si\ominus (l-1) \subset N(v)$, $l(v) =i+1$. Let $s \in [k+4]$ be the smallest integer such that $ s \in C_{\si\ominus (l-1)}$. Here, $C_{\si\ominus (l-1)} \subset C_{s}=\{s, s+1, \dots s+i+1\}$.
Define a matching $M_i^l$ on $ (\psi_i^l)^{-1}(\beta_i^l)$ as
\[
  \sigma \longrightarrow \left\{\def\arraystretch{1.2}%
  \begin{array}{@{}c@{\quad}l@{}}
    \sigma \cup  \ (l+1)(l+3) & \text{if $C_{\si\ominus (l-1)} \subset C_{3}, \ (l+1)(l+3) \nin \sigma,$ }\\
    \sigma \cup \ (l+1)(l+4) & \text{if $C_{\si\ominus (l-1)} \subset C_{4}, \ (l+1)(l+4) \nin \sigma,$}\\

    \vdots \\
    \sigma \cup  \ (l+1)(l+k+3-i) & \text{if $C_{\si\ominus (l-1)} \subset C_{k+3-i}, (l+1)(l+k+3-i) \nin \si.$}
  \end{array}\right.
\]

Let $M^{i,l}_{(l+1)r} = \{(\sigma, \sigma \cup \ (l+1)r) \ | \ C_{\sigma\ominus (l-1)} \subset C_{r}, (l+1)r \notin \sigma \}$ and $\Delta^{i,l}_{(l+1)r} = \{ \sigma \in (\psi_i^l)^{-1}(\beta_i^l)\  | \ (\sigma \setminus (l+1)r, \sigma \cup  \ (l+1)r) \in M^{i,l}_{(l+1)r} \}$, $l+3 \leq r \leq l+k+3-i$. We now prove that
 \begin{claim} \label{lem5.1}
 $(\psi_{i}^l)^{-1}(\beta_{i}^l) =\overset{l+k+3-i}{\underset{r=l+3}{\sqcup}}(\Delta_{(l+1)r}^{i,l}).$
 \end{claim}

  Clearly,  $ \overset{l+k+3-i}{\underset{r=l+3}{\sqcup}}(\Delta_{(l+1)r}^{i,l}) \subseteq(\psi_{i}^l)^{-1}(\beta_{i}^l) $. Let $\si \in (\psi_{i}^l)^{-1}(\beta_{i}^l)$. There exists a smallest $s \in \{3, \dots, k+3-i\}$ such that $C_{\si \ominus (l-1)}\subset C_{s}$. Since $s+1 <s+i+1$, $C_{\{\si\ominus (l-1)\} \sm 2(s+1)}$ (or $C_{\{\si\ominus (l-1)\} \cup \ 2(s+1)}$) $\subseteq C_{s}$. Further, $12 \in \{\si\ominus (l-1)\} \sm 2(s+1)$  (or $\{\si\ominus (l-1)\} \cup \ 2(s+1)$) shows that $(\{\si\ominus (l-1)\} \sm 2(s+1), \si\ominus (l-1))$  (or $(\si\ominus (l-1), \{\si\ominus (l-1)\} \cup \ 2(s+1))$) $\in M^{i,1}_{2(s+1)}$. Thus, $(\si \sm (l+1)(l+s), \si)$  (or $(\si, \si \cup  \ (l+1)(l+s))$) $\in M^{i,l}_{(l+1)(l+s)}$, thereby showing that $\si \in \Delta_{(l+1)(l+s)}^{i,l}$.

If $\si \in \Delta_{(l+1)(l+r)}^{i,l} \cap \Delta_{(l+1)(l+s)}^{i,l}$, $3 \leq r<s \leq k+3-i$, we see that $C_{\si\ominus (l-1)}\subset C_{s}$, $C_{r}$. $C_{\si\ominus (l-1)}\subset C_{s} \rar r \nin C_{\si\ominus (l-1)} \rar \si\ominus (l-1) \nin \Delta_{2(r+1)}^{i,1} \rar \si \nin \Delta_{(l+1)(l+r)}^{i,l}.$ Therefore, $\Delta_{(l+1)(l+r)}^{i,l} \cap \Delta_{(l+1)(l+s)}^{i,l}$ is an empty set. This completes the proof of Claim \ref{lem5.1}.

Let $M_{i}^l= \overset{l+k+3-i}{\underset{r=l+3}{\sqcup}} (M_{(l+1)r}^{i,l})$.  Lemma \ref{pac}  and Claim \ref{lem5.1} show that $M_i^l$ is a perfect acyclic matching on $(\psi_{i}^l)^{-1}(\beta_{i}^l)$, thereby showing that $\B_{i}^l\searrow \B_{i+1}^l$, from Corollary \ref{rem37}. This proves Lemma \ref{lem2}.
\end{proof}

\noindent {\bf Proof of Theorem \ref{thm2} :} 

From Lemma \ref{lem1} and Lemma \ref{lem2}, it follows that $W_k^l$ collapses to $W_k^{l+1}$ for $1\leq l \leq k+4$.  On applying this $k+4$ times, we get $\mathcal{N} (S_{2,k}) =W_{k}^{1} \searrow W_{k}^{k+5} = \mathcal{N}(SG_{2,k})$. This proves Theorem \ref{thm2}.

\section{Homotopy type of $\N(KG_{2,k})$}

In this section, $N(i(i+1))$ represents the  neighborhood  of $i(i+1)$ in the Kneser graph $KG_{2,k}$.  Let $X_{i,i+1}= \{\si \subseteq N(i(i+1)) \ | \ \si \nin \mathcal{N}(S_{2,k})\}$. For any $v \in V(SG_{2,k})$, $N(v)$ is the same in both $S_{2,k}$ and $KG_{2,k}$. We observe the following:
\begin{rem}\label{rem6.1}
\begin{enumerate}
\item[$(i)$] $\si \in X_{i,i+1} \Leftrightarrow C_\si =\{i,i+1\}$ and $j(j+1)\in \si$, $j \neq i \in [k+4]$.
\item[$(ii)$] $\mathcal{N} (KG_{2,k})=\mathcal{N}(S_{2,k}) \ \bigsqcup \ ( \bigsqcup\limits_{i=1 }^{k+4} X_{i,i+1})$.
\end{enumerate}
\end{rem}

Let $i \in [k+4]$, $I_{i}=[k+4] \sm \{i-1, i, i+1\}$ and $I_{j}^{i}= [k+4] \sm \{i, i+1, j-1, j, j+1\}$ for each $j \in I_{i}$.
If $t \in I_{j}^{i}$ and $\mathcal{I}_{t} =I_{j}^{i} \sm \{t,t+1,\ldots, k+4\}$, define
\begin{equation}\label{e6.1}
\begin{aligned}
& E_j^i= \{\sigma \in X_{i,i+1} \ | \ j(j+1) \in \sigma ~\text{and}~ \ s(s+1) \notin \sigma, ~\forall~ s \in I_i \sm \{j,j+1 \dots, k+4\}\},
\\
& F_{j}^{i} =\{\si \in E_{j}^{i} \ | \ js \in \si ~\text{and}~ C_{\si \sm js}=\{i, i+1, s\}, \forall ~s \in I_{j}^{i}\} ~~and
\\
& F_{j,t}^{i} =\{\si \in E_{j}^{i}\ | \ \forall~ r \in \mathcal{I}_{t}, ~ jr\in \si, r \in C_{\si\sm jr}~\text{and either}~jt \nin \si ~\text{or}~ C_{\si \sm jt}=\{i, i+1\}\}.
\end{aligned}
\end{equation}

\begin{lem}\label{lem61}
Let $i \in [k+4]$, $j \in I_i$ and $t \in I_j^i$. Then,
\begin{enumerate}

   \item[(i)] $X_{i,i+1}= \bigsqcup\limits_{j \in I_i } E_j^i$,
   \item[(ii)] $E_j^i= \bigsqcup\limits_{t \in I_j^i} F_{j,t}^i \sqcup F_{j}^i$ and
   \item[(iii)] $\si \in F_{j}^{i} \rar \si = \{(j-1)(j+1), j(j+1)\} \cup \{jr \ | \ r \in I_{j}^{i}\}$.

  \end{enumerate}
\end{lem}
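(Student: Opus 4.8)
The plan is to peel $X_{i,i+1}$ apart in two stages, using throughout only Remark~\ref{rem6.1}(i) (so $\sigma\in X_{i,i+1}$ iff $C_\sigma=\{i,i+1\}$ and $\sigma$ contains some unstable vertex $j(j+1)$ with $j\ne i$) together with the two bookkeeping identities $C_{\si\sm\{ab\}}\subseteq C_\sigma\cup\{a,b\}$ and $C_{\si\cup\{ab\}}=C_\sigma\sm\{a,b\}$ recorded after \eqref{eq2.1}. I also use that ``least'' and the tails $I_i\sm\{j,\dots,k+4\}$, $\mathcal I_t$ refer to the linear order on $[k+4]=\{1,\dots,k+4\}$, i.e. $I_i\sm\{j,\dots,k+4\}=\{s\in I_i:s<j\}$ and $\mathcal I_t=\{r\in I_j^i:r<t\}$. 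The three assertions are then proved in turn.

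For (i): given $\sigma\in X_{i,i+1}$, Remark~\ref{rem6.1}(i) produces an unstable $j(j+1)\in\sigma$ with $j\ne i$; since $\sigma\subseteq N(i(i+1))$ no vertex of $\sigma$ meets $\{i,i+1\}$, so $j\nin\{i-1,i,i+1\}$, i.e. $j\in I_i$. Choosing the least such $j$ puts $\sigma$ in $E_j^i$, and then $\sigma$ can lie in no $E_{j'}^i$ with $j'<j$ because it already contains $j(j+1)$; as each $E_j^i\subseteq X_{i,i+1}$ by construction, this yields the disjoint union. For (ii): fix $\sigma\in E_j^i$. Whenever $s\in I_j^i$ and $js\in\sigma$ we have $C_{\si\sm js}\subseteq\{i,i+1,j,s\}$, and since $j(j+1)\in\sigma$ with $j(j+1)\ne js$ the element $j$ stays covered, so $C_{\si\sm js}$ is either $\{i,i+1\}$ or $\{i,i+1,s\}$; call $s$ \emph{active} for $\sigma$ in the latter case. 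If every $s\in I_j^i$ is active then $\sigma\in F_j^i$; otherwise let $t$ be the least non-active element, note that every $r\in\mathcal I_t$ is active (hence $jr\in\sigma$ and $r\in C_{\si\sm jr}$) and that ``$t$ not active'' unwinds, via the dichotomy above, exactly to the displayed condition ``$jt\nin\sigma$ or $C_{\si\sm jt}=\{i,i+1\}$'', so $\sigma\in F_{j,t}^i$. Since $t$ is uniquely the least non-active element and the two cases are mutually exclusive, $E_j^i=\bigsqcup_{t\in I_j^i}F_{j,t}^i\sqcup F_j^i$.

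For (iii): let $\sigma\in F_j^i$. For each $r\in I_j^i$, the equality $C_{\si\sm jr}=\{i,i+1,r\}$ forces $r$ to become uncovered after deleting $jr$, so $jr$ is the only vertex of $\sigma$ through $r$. Hence any $\alpha\in\sigma$ different from the vertices $jr$ ($r\in I_j^i$) and from $j(j+1)$ avoids all of $I_j^i$ and also $\{i,i+1\}$ (as $C_\sigma=\{i,i+1\}$), so $\alpha\subseteq\{j-1,j,j+1\}$, leaving only $\alpha\in\{(j-1)j,\,(j-1)(j+1)\}$. Now $\{jr:r\in I_j^i\}\cup\{j(j+1)\}$ covers exactly $[k+4]\sm\{i,i+1,j-1\}$, so (when $j-1\nin\{i,i+1\}$) the requirement $C_\sigma=\{i,i+1\}$ forces $\sigma$ to contain a vertex through $j-1$; of the two candidates, $(j-1)j$ is the unstable vertex with base $j-1\in I_i$ strictly below $j$, which $\sigma\in E_j^i$ forbids. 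Therefore $(j-1)(j+1)\in\sigma$ and no further vertex occurs, i.e. $\sigma=\{(j-1)(j+1),j(j+1)\}\cup\{jr:r\in I_j^i\}$.

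The real obstacle is the cyclic bookkeeping of $[k+4]$ lurking in the last step: the forcing in (iii) tacitly uses ``$j-1\nin\{i,i+1\}$'' and ``the base $j-1$ is below $j$'', both of which degenerate in the wrap-around configurations $j\equiv i+2$ (there $j-1=i+1\in C_\sigma$, so no covering vertex for $j-1$ is needed and the description of $\sigma$ simplifies correspondingly) and $j=1$ (there $j-1=k+4>j$). These boundary configurations, and the smallest values of $k$ where $I_i$ or $I_j^i$ may be empty or tiny, have to be checked directly; everything else reduces to the routine ``active element'' bookkeeping of (ii) and the two identities for $C_\sigma$ quoted at the outset.
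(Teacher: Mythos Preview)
Your argument is the paper's argument: in (i) you take the least $j$ with $j(j+1)\in\sigma$, in (ii) you take the least ``non-active'' index (this is exactly the paper's $t'=\min(I_j^i\setminus T)$ with $T=\{l\in I_j^i: l\in C_{\sigma\setminus jl}\}$), and in (iii) you use that $jr$ is the unique vertex of $\sigma$ through each $r\in I_j^i$ and then isolate what can pass through $j-1$. Your explicit flagging of the boundary configurations ($j=i+2$, $j=1$) is appropriate rather than a defect --- the paper's own proof of (iii) silently uses ``$(j-1)j\notin\sigma$ because $\sigma\in E_j^i$'' and ``$j-1\in S_\sigma$'', both of which break down precisely when $j-1=i+1$, so you are being more careful, not less.
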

\begin{proof}
\begin{enumerate}

\item[$(i)$] Let $\si \in X_{i,i+1}$. From Remark \ref{rem6.1}$(i)$, $C_\si = \{i,i+1\}$ and $s(s+1) \in \si$ for some $s \in [k+4]$. Let $l=\text{min}\{ s \in [k+4]\ | \ s(s+1) \in \si \}.$ Since this set is nonempty and $l \in I_{i}$, we get  $\si \in E_l^i $. Thus, $X_{i,i+1} \subseteq \bigsqcup\limits_{j \in I_i } E_j^i. $

\item[$(ii)$]  Let $\si \in E^i_j,$ $T=\{l \in I^i_j \ | \ l \in C_{\si \sm jl} \}$ and $S \subseteq T$ be the set $\{r \in I^i_j\sm \{k+4\} \ | \ s \in C_{\si \sm js} \ \forall \ s \in \mathcal{I}_{r+1} \}$. If $S$ is an empty set and $t_1= \text{min} \{l \ | \ l \in I_j^i \}$, then $\mathcal{I}_{t_1}$ is an empty set. If $t_1\in  C_{\si \sm jt_1},$ then $t_1\in S,$ a contradiction. Thus, either $jt_1 \notin \si$ or $C_{\si \sm jt_1}=C_\si,$ which shows that $\si \in F_{j,t_1}^i$. 

Now, consider the case when $S$ is non empty. If $T=I^i_j$, then $ s \in C_{\si \sm js} \ \forall \  s \in I^i_j $ which implies that $\si \in F^i_j.$ If $T \neq I^i_j$, then $r \in S \rar s\in S, \ \forall \ s <r, s \in I_j^i$ {\it i.e.} $t \in I_j^i\sm T \rar t>l, \ \forall \ l \in S.$ Choose $t^{'}= \text{min} \{l \ | \ l \in I_j^i\sm T\}$ and $t_2 = \text{max} \{t \in I_j^i \ | \ t \in S\}$.  Here, $t^{'} \nin T\rar jt^{'} \nin \si \text{ or } t^{'} \nin C_{\si \sm jt^{'}} \rar \si \in F_{j,t^{'}}^i$. $T \neq I^i_j \rar t_2 < \text{max}\{l \ | \ l \in I^i_j\}$. Thus, $E_j^i = \bigsqcup\limits_{t \in I_j^i} F_{j,t}^i \sqcup F_{j}^i.$

\item[$(iii)$]  From Equation \eqref{e6.1}, $\si \in F_{j}^{i}$ implies that $s \in C_{\si \sm js}$ for each $ s \in I_{j}^{i}$. Further, $\si \in F_{j}^{i}$ shows that $\si \in E_{j}^{i}.$ This implies that $j(j+1) \in \si$ and $(j-1)j \nin \si$. Since $C_{\si} =\{i,i+1\}$, we see that $j-1 \in S_{\si}$, which implies that $(j-1)(j+1)$ is the only possible element of $\si$ containing $j-1$.
\end{enumerate}
\end{proof}

Let $P_k$ denote the face poset of $\mathcal{N} (KG_{2,k})$.

\begin{lem}\label{lem62} For $i \in [k+4]$, $j \in I_i$ and $t \in I_j^i$, the following are poset maps:
\begin{enumerate}
  \item[(i)] $ \varphi : P_k \longrightarrow \{a_1  >  a_2 >  a_3 > \ldots > a_{k+4} > a\}$ defined by 
  \[
  \varphi^{-1}(x) = \left\{\def\arraystretch{1.2}%
  \begin{array}{@{}c@{\quad}l@{}}
    X_{i,i+1}& \text{if $x=a_{i} \ i \in \{1, \ldots , k+4\},$ }\\
    \mathcal{F}(\mathcal{N}(S_{2,k}))& \text{if $x=a$.}
  \end{array}\right.
\]
  
  \item[(ii)] $\varphi_i:X_{i,i+1} \longrightarrow \{b_{1}^i > b_{2}^i >\ldots> b_{i-2}^i  > b_{i+2}^i > b_{i+3}^i > \ldots > b_{k+4}^i \}$ defined by $\varphi_i^{-1}(b_j^i)=E_j^i,$ for $j \in I_i$.

  \item[(iii)]  $\varphi_{i,j} : E_{j}^{i} \longrightarrow J$, where $J=\{d_{j,t}^{i} \ | \ t \in I_{j}^{i} \} \cup  \ \{d^i_j\}$ with $d_{j,t}^{i}  > d_{j,t'}^{i} >d^i_j$ for $t <t'$, defined by
   \[
  \varphi_{i,j}(\si) = \left\{\def\arraystretch{1.2}%
  \begin{array}{@{}c@{\quad}l@{}}
    d_{j,t}^{i}& \text{if $\si \in F_{j,t}^{i}, \ t \in I_{j}^{i},$ }\\
    d_{j}^{i}& \text{if $\si \in F_{j}^{i}$.}
  \end{array}\right.
\] 
  \end{enumerate}
\end{lem}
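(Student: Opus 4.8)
The statement to prove is that the three displayed maps are order-preserving. For each one the strategy is the same: take $\tau \subseteq \sigma$ in the relevant subposet and check $\varphi(\tau) \geq \varphi(\sigma)$ using only the combinatorial characterizations of the pieces $X_{i,i+1}$, $E_j^i$, $F_{j,t}^i$, $F_j^i$ recorded in Remark \ref{rem6.1} and Lemma \ref{lem61}. The key monotonicity facts to exploit are the observations already noted after \eqref{eq2.1}: passing to a face can only enlarge the complement, $C_{\tau} \supseteq C_\sigma$ when $\tau\subseteq\sigma$, and more precisely $C_{\sigma\sm\{ab\}}\subseteq C_\sigma\cup\{a,b\}$.

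For part (i), the target chain has $a$ as the unique minimum, so I only need: if $\tau\subseteq\sigma$ and $\sigma \notin \mathcal N(S_{2,k})$, i.e.\ $\sigma \in X_{i,i+1}$ for some $i$, then either $\tau\in\mathcal N(S_{2,k})$ (fine, $\varphi(\tau)=a$ is the bottom, nothing to check) or $\tau\in X_{i',i'+1}$ for some $i'$ and I must show $i'=i$, so that $\varphi(\tau)=\varphi(\sigma)=a_i$. This follows from Remark \ref{rem6.1}$(i)$: $\sigma\in X_{i,i+1}$ forces $C_\sigma=\{i,i+1\}$, hence $C_\tau\supseteq\{i,i+1\}$, and $\tau\in X_{i',i'+1}$ forces $C_\tau=\{i',i'+1\}$ (a two-element set), so $\{i',i'+1\}=\{i,i+1\}$ and $i'=i$. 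For part (ii), fix $i$ and take $\tau\subseteq\sigma$ with both in $X_{i,i+1}$, say $\sigma\in E_j^i$ and $\tau\in E_{j'}^i$ where by Lemma \ref{lem61}$(i)$ the indices are the minima of the respective ``consecutive-pair'' index sets. Since $\tau\subseteq\sigma$, every $s(s+1)\in\tau$ also lies in $\sigma$, so $\min\{s\in I_i : s(s+1)\in\tau\} \geq \min\{s\in I_i : s(s+1)\in\sigma\}$, i.e.\ $j'\geq j$; and the chain in the target is ordered so that larger index $\mapsto$ smaller element, giving $\varphi_i(\tau)=b_{j'}^i \geq$? — one must match this against the displayed order $b_1^i > b_2^i > \cdots$, which has $b_j^i$ decreasing in $j$, so $j'\geq j$ indeed gives $\varphi_i(\tau)\leq\varphi_i(\sigma)$; I'd double-check the inequality direction against the stated target poset and write it carefully. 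For part (iii), fix $i,j$ and take $\tau\subseteq\sigma$ in $E_j^i$. If $\sigma\in F_j^i$, then by Lemma \ref{lem61}$(iii)$ $\sigma$ is the explicit maximal set $\{(j-1)(j+1),j(j+1)\}\cup\{jr : r\in I_j^i\}$, so any $\tau\subseteq\sigma$ still has $C_{\tau\sm jt}$ either $\{i,i+1\}$ or $\{i,i+1,t\}$ for each relevant $t$ — I need to trace through that $\tau$ then lands in $F_j^i$ or some $F_{j,t}^i$ with the right comparison. The crux is: if $\sigma\in F_{j,t}^i$ (so there is a ``first failure'' at index $t$: either $jt\notin\sigma$ or $C_{\sigma\sm jt}=\{i,i+1\}$), and $\tau\subseteq\sigma$, then the first failure for $\tau$ occurs at an index $t'\leq t$ — removing elements can only create failures earlier, never repair them later — so $\tau\in F_{j,t'}^i$ with $t'\leq t$, and since the target is ordered $d_{j,t}^i > d_{j,t'}^i > d_j^i$ for $t<t'$ (note this is the reverse of the naive ordering), $t'\leq t$ gives $\varphi_{i,j}(\tau)\geq\varphi_{i,j}(\sigma)$.

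**Main obstacle.** The real work is part (iii): verifying that ``the first failure index can only move left under taking faces'' genuinely holds for the precise definitions of $F_{j,t}^i$ and $F_j^i$ in \eqref{e6.1}, including the boundary bookkeeping with $\mathcal I_t$ and the two alternative failure conditions ($jt\notin\sigma$ versus $C_{\sigma\sm jt}=\{i,i+1\}$). One must handle the case where removing a vertex $jr$ from $\sigma$ changes whether $r\in C_{\sigma\sm jr}$, and the case where the removed vertex is not of the form $jr$ at all but affects $C_\sigma$ — here the invariant $C_\sigma=\{i,i+1\}$ (from $\sigma\in X_{i,i+1}$) constrains things, since a face of $\sigma$ that still lies in $X_{i,i+1}$ must also have complement exactly $\{i,i+1\}$, so only vertices disjoint from $\{i,i+1\}$ and ``non-essential'' can be dropped. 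I expect the argument to be a careful but routine case analysis once one writes $C_{\tau\sm jt} \subseteq C_{\sigma\sm jt}\cup(\sigma\sm\tau)$ and tracks which of $i,i+1,t$ can enter. The orientation of each target chain (which end is the maximum) must be read off \emph{exactly} from Lemma \ref{lem62}'s statement, since a sign error there flips every ``poset map'' claim; I would pin this down first and phrase all inequalities relative to it.
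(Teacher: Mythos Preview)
Your treatment of parts $(i)$ and $(ii)$ is fine and matches the paper's argument. The real problem is part $(iii)$: your central claim that ``the first failure for $\tau$ occurs at an index $t'\le t$'' is the \emph{wrong direction}. In fact, for $\tau\subseteq\sigma$ with both in $E_j^i$, the first failure index of $\tau$ is at least that of $\sigma$, not at most.

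Here is why your intuition (``removing elements can only create failures earlier, never repair them later'') breaks down. Suppose $\sigma\in F_{j,s}^i$. For every $r\in\mathcal I_s$ the ``nice'' condition says $jr\in\sigma$ and $r\in C_{\sigma\sm jr}$; the second clause means $jr$ is the \emph{unique} element of $\sigma$ containing $r$. Now $\tau\in E_j^i$ forces $C_\tau=\{i,i+1\}$, so $r\in S_\tau$; since $\tau\subseteq\sigma$ and the only element of $\sigma$ containing $r$ is $jr$, we must have $jr\in\tau$. Moreover $C_{\tau\sm jr}\supseteq C_{\sigma\sm jr}\ni r$, so ``nice'' holds for $\tau$ at $r$ as well. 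Thus $\tau$ satisfies the nice condition for every $r\in\mathcal I_s$, and its first failure (if any) is at some $t'\ge s$, giving $\varphi_{i,j}(\tau)=d_{j,t'}^i\le d_{j,s}^i=\varphi_{i,j}(\sigma)$ (or $\varphi_{i,j}(\tau)=d_j^i$, still $\le$). Conversely, the failure at $s$ of type ``$C_{\sigma\sm js}=\{i,i+1\}$'' can genuinely be \emph{repaired} in $\tau$ by dropping a redundant cover of $s$, so earlier-only movement is simply false.

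This also clears up the orientation confusion in your plan: an order-preserving map needs $\tau\subseteq\sigma\Rightarrow\varphi(\tau)\le\varphi(\sigma)$ (you write $\ge$ at the top, then silently use $\le$ in $(i)$ and $(ii)$, then revert to $\ge$ in $(iii)$). Once you flip the direction of the failure shift, the correct inequality $\varphi_{i,j}(\tau)\le\varphi_{i,j}(\sigma)$ falls out. The paper proves exactly this, by assuming $r<s$ and deriving a contradiction; for the remaining case $\sigma\in F_j^i$, it observes (via Lemma~\ref{lem61}$(iii)$) that the unique element of $F_j^i$ has no proper face in $E_j^i$, so $\tau=\sigma$.
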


\begin{proof}\begin{enumerate}
\item[$(i)$] Let $\tau \subseteq \si$. From Remark \ref{rem6.1}, $\tau \in X_{i,i+1}$ implies that $C_\tau =\{i,i+1\}$ and $j(j+1) \in \tau $, for some $j \neq i$. Therefore, $ C_\si \subseteq C_\tau$ implying that $ \si \in X_{i,i+1}$, thereby proving $(i)$.

\item[$(ii)$] Let $\tau \subseteq \si$ and $\tau \in E_{s}^{i}$. Assume that $\si \in E_r^i$,  where $r \in I_i$ and $r >s$. Here, for each $j \in I_i \sm \{r,r+1 \dots, k+4\}$, we see that $j(j+1) \notin \si$ which implies that $s (s+1) \nin \tau$, a contradiction. Thus, $\varphi_i(\si) \geq \varphi_i(\tau)$.

\item[$(iii)$] Assume that $\varphi_{i,j} (\tau)= d_{j,r}^{i}$ and $\varphi_{i,j} (\si)= d_{j,s}^{i}$, $r<s$, where $\tau \subseteq \si$.
Here $\tau \in F_{j,r}^{i}.$  From Equation \eqref{e6.1},  either $jr \nin \tau$ or $C_{\tau \sm jr} =\{i, i+1\}$. Thus, $ r \in S_{\tau}$.  If $jr \nin \tau$, then from Equation \eqref{e6.1}, $r <s \rar jr  \in \si$ and $C_{\si \sm jr} =\{i,i+1,r\}$. Therefore, $r \in C_{\tau \sm  jr}=C_{\tau}$, a contradiction.
If $C_{\tau \sm jr} =\{i, i+1\}$, then $C_{\si \sm jr} =\{i, i+1\}$, a contradiction. Thus, $r >s$. Further if $\varphi_{i,j} (\si)= d_{j}^{i}$, then $\tau \in E^i_j\rar C_\tau=\{i,i+1\}, j(j+1)\in \tau$. So, $\tau \subseteq \si\rar \tau =\si$. Hence, $\varphi_{i,j}$ is a poset map.
\end{enumerate}
\end{proof}

\noindent We now prove Theorem \ref{thm3}.

\begin{proof}
Let $i \in [k+4]$, $j \in I_i$ and $t \in I_j^i$. Then,
\begin{enumerate}
\item[$(i)$] $E^i_j $ has an acyclic matching with one critical $k$-cell.

Let $\si \in F_{j,t}^i$. If $jt \in \si$, then $C_{\si \sm jt}=\{i,i+1\}$ implies that $\si \sm jt \in F_{j,t}^i$. If $jt \nin \si$, then $ j,t\neq i,i+1$ implies that $C_{\si\cup \ jt}=\{i,i+1\}$. Thus, $C_{\{\si \cup \ jt\}\sm jt}=\{i,i+1\}$ which implies that $\si \cup jt \in F_{j,t}^i$. Hence, $F_{j,t}^i = \Delta_{j,t}^i=\{\si \in F_{j,t}^i \ | \ (\si \sm jt, \si \cup \ jt) \in M_{j,t}^i \}$, where $M_{jt}^i = \{(\si \sm jt, \si \cup \ jt) \ | \ \si \sm jt, \si \cup \ jt \in F_{j,t}^i \}$. By Lemma \ref{pac}, $M_{jt}^i$ is a perfect acyclic matching on $\Delta_{j,t}^i=F_{j,t}^i = \varphi_{i,j}^{-1}(d_{i,j}^t)$. From the lemmas \ref{lem62}$(iii)$ and \ref{lem34}, $M_j^i= \bigsqcup\limits_{t\in I_j^i } M_{jt}^i$ is an acyclic matching on $\bigsqcup\limits_{t \in I_j^i} F_{j,t}^i \sqcup F_{j}^i=E_j^i$ with $F_{j}^i$ as the set of critical cells. Since $F_{j}^i$ contains exactly one $k$-cell from  Lemma \ref{lem61}$(iii)$, we see that $M_j^i$ is an acyclic partial matching on
$E_j^i=\varphi_{i}^{-1}(b_j^i)$ with exactly one critical $k$-cell.

\item[$(ii)$] $X_{i,i+1}$ has an acyclic matching with $(k+1)$ critical $k$-cells.

By Lemmas \ref{lem62}$(ii)$ and \ref{lem34}, $M_i= \bigsqcup\limits_{j \in I_i} M_j^i$ is an acyclic matching on $X_{i,i+1} = \bigsqcup\limits_{j \in I_i} E_j^i = \varphi^{-1}(a_i)$ with $|I_i|=(k+1)$ critical $k$-cells.

\item[$(iii)$] $P_k$ has an acyclic matching with $(k+4)(k+1)+1$ critical $k$-cells. In \cite{BL03}, Bj\"orner and de Longueville proved that $\mathcal{N}(SG_{n,k})$ is homotopy equivalent to a $k$-sphere.
It can be shown that $\mathcal{F}(\mathcal{N}(S_{2,k}))$ has an acyclic matching with exactly one critical cell of dimension $k$ (the details of this matching are available with the authors).   Now, using Theorem
 \ref{thm2}, we have an acyclic matching, say R, on $\mathcal{F}(\mathcal{N}(S_{2,k})) = \varphi^{-1}(a)$ with one critical cell of dimension $k$ . Thus, by Lemmas \ref{lem62}$(i)$ and \ref{lem34}, $\{\bigsqcup\limits_{i= 1}^{k+4} M_i\} \bigsqcup R$ is an acyclic partial matching on $\{\bigsqcup\limits_{i= 1}^{k+4} \varphi^{-1}(a_i)\} \bigsqcup \varphi^{-1}(a)=P_k$ with $(k+4)(k+1)+1 = k^2+5k+5$ critical cells of dimension $k$.
\end{enumerate}
The proof  of Theorem \ref{thm3} now follows from Corollary \ref{rem36}.
\end{proof}

\section{Acknowledgement}
The authors would like to thank the referee for having provided several relevant comments. 

\bibliographystyle{alpha}

\end{document}